\newtheorem{alphtheorem}{Theorem}
\newtheorem{theorem}{Theorem}
\newtheorem{problem}{Problem}[theorem]
\newtheorem{lemma}[theorem]{Lemma}
\newtheorem{definition}[theorem]{Definition}
\newtheorem{claim}[theorem]{Claim}
\newtheorem{corollary}[theorem]{Corollary}
\newtheorem{conjecture}{Conjecture}
\DeclareMathOperator\G{\mathcal{G}}
\DeclareMathOperator\R{\mathbb{R}}
\title{ON GENERALIZED LIST $\G$-FREE COLOURINGS OF GRAPHS}
\author{Yaser Rowshan$^1$}
\keywords{$\G$-free Subset, $k$-choosable, $d$-regular graphs, Conditional Coloring, Vertex Arboricity,  $L$-$G$-free-colorable.}
\subjclass[2010]{05C15.}
\address{$^1$Y. Rowshan, 
	Department of Mathematics, Institute for Advanced Studies in Basic Sciences (IASBS), Zanjan 45137-66731, Iran.}
\email{y.rowshan@iasbs.ac.ir,~~~y.rowshan.math@gmail.com.}
\begin{document}
	\maketitle 	
	\begin{abstract} 
 For given graph $H$ and graphical property $P$, the conditional chromatic number $\chi(H,P)$   of $H$, is the smallest number $k$, so that   $V(H)$ can be decomposed into  sets $V_1,V_2,\ldots, V_k$, in which $H[V_i]$ satisfies the property $P$, for each $1\leq i\leq k$. When property $P$ be  that  each color class contains no copy of $G$, we write $\chi_{G}(H)$ instead of  $\chi(G,P)$, which is called the $G$-free chromatic number. Due to this, we say  $H$ has a $k$-$G$-free coloring if there is a map $c : V(H) \longrightarrow \{1,\ldots,k\}$, so that each of the  color classes of $c$ be $G$-free. Assume that for each vertex $v$ of a graph $H$ is assigned a set $L(V)$ of colors, called a color list. Set $g(L) = \{g(v): v\in V(H)\}$, that is the set of colors chosen for the vertices of $H$ under $g$. An $L$-coloring $g$ is called  $G$-free,  so that:
\begin{itemize}	
\item  $g(v)\in L(v)$,	 for any $v\in V(H)$.
\item  $ H[V_i]$ is $G$-free for each $i=1,2,\ldots, L$.
\end{itemize}
If there exists an $L$-coloring of $H$, then $H$ is called $L$-$G$-free-colorable. A graph $H$ is said to be $k$-$G$-free-choosable if there exists an $L$-coloring for any list-assignment $L$ satisfying  $|L(V)|\geq k$ for each $v\in V(H)$, and  $H[V_i]$ be $G$-free for each $i=1,2,\ldots, L$. Let  graph $H$ and  a collection of graphs $\G$ are given, the $\chi_{\G}^L(H)$ of $H$ is the last integer $k$, so that $H$ is $k$-$\G$-free-choosable i.e. $H[V_i]$ is $\G$-free for each $i=1,2,\ldots, k$ i.e. contains no copy of  any member of $\G$. In this article, we determin some upper bounds for $\chi_G^L(H)$, in term of the $\Delta(H), |V(H)|$ and $\delta(G)$. In particular, we show that $\chi_G^L(H)=\chi_G(H)$ for some graph $H$ and $G$,  $\chi_G^L(H\oplus H')\leq \chi_G^L(H)+\chi_G^L(H')$ for each $G$, $H$, and $H'$. Also, we show that  $\chi_{\G}(H\oplus K_n)=\chi^L_{\G}(H\oplus K_n)$, where $\G$ is a collection of all $d$-regular graphs, and some $n$.
	\end{abstract}
	
	\section{Introduction} 
	All graphs $G$ considered here are undirected, simple, and finite graphs. For  given graph  $H=(V(H),E(H))$, the  maximum degree of $H$ is denoted by  $\Delta(H)$, and the minimum degree of $H$ is denoted by $\delta(H)$. 
	 If $v$ be a vertex of $H$, the degree and neighbors of $v$ are denoted by $\deg_H{(v)}$ ($\deg{(v)}$) and $N_H(v)$, respectively.  
  The join of two graphs $G$ and $H$ is denoted by $G\oplus H$ and obtained from $G$ and $H$ by joining each vertex of $G$ to all vertices of $H$.
 We use  $\chi(H)$ to denote the  chromatic number of $H$. For given graph $H$, let each vertex $v$ of  $H$ be assigned a set $L(V)$ of colors, called a color list. An $L$-coloring of $H$ is a vertex-coloring $c$ so that:
 \begin{itemize}	
 	\item  For any $v\in V(H)$,	$c(v)\in L(v)$.
	\item  $c(v)\neq c(v')$ for each $vv'\in E(H)$.
\end{itemize}
 If there exists an $L$-coloring of $H$, then $H$ is called $L$-colorable. A graph $H$ is said to be $k$-choosable if there exists $L$-coloring for any list-assignment $L$ satisfying  $|L(V)|\geq k$ for each $v\in V(H)$. The choice number $\chi_L(H)$ of $H$ is the minimum integer $k$ so that $H$ is $k$-choosable. Note that $\chi(H)\leq \chi_L(H)$ for any graph $H$, however, equality does not necessarily hold.  The following particular case has been proved by  Ohba \cite{ohba2002chromatic}: 
\begin{alphtheorem}\label{thm1}{\rm\cite{ohba2002chromatic}}
	If $|V(H)|\leq  \chi(H)+\sqrt{2 \chi(H)}$, then:
	\[\chi_L(H)=\chi(H).\]   
\end{alphtheorem}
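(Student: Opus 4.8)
The plan is to prove the nontrivial inequality $\chi_L(H)\le\chi(H)$; evaluating any list-colouring on the constant lists $L(v)=\{1,\dots,\chi_L(H)\}$ already gives $\chi(H)\le\chi_L(H)$, so this yields equality. Write $k:=\chi(H)$, so the goal is that $H$ is $k$-choosable. First I would reduce to complete multipartite graphs in the usual way: fix a proper $k$-colouring of $H$ with colour classes $P_1,\dots,P_k$ and let $H^{+}$ be obtained by adding all edges between distinct classes; then $H^{+}=K_{|P_1|,\dots,|P_k|}$, the fixed colouring is still proper so $\chi(H^{+})=k$, and $|V(H^{+})|=|V(H)|\le k+\sqrt{2k}$. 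Since $H$ is a spanning subgraph of $H^{+}$, any proper $L$-colouring of $H^{+}$ is also one of $H$, hence $\chi_L(H)\le\chi_L(H^{+})$, and it suffices to treat $H=K_{n_1,\dots,n_k}$ with $n_1\ge\cdots\ge n_k\ge1$ and $\sum_i n_i\le k+\sqrt{2k}$. For such an $H$ and a list assignment $L$ with each $|L(v)|\ge k$, the workable reformulation is that an $L$-colouring is precisely a choice of pairwise disjoint colour sets $C_1,\dots,C_k$ with $C_i\cap L(v)\neq\emptyset$ for every $v\in P_i$ (take $C_i$ to be a minimal transversal of $\{L(v):v\in P_i\}$, so $|C_i|\le n_i$). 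The hypothesis is really a bound on part sizes: $\sum_i(n_i-1)=|V(H)|-k\le\sqrt{2k}$, so at most $\sqrt{2k}$ parts are non-trivial; in particular, if $k\ge3$ the parts cannot all have size $\ge2$ (that would force $2k\le k+\sqrt{2k}$, hence $k\le2$), so $H$ has a singleton part.

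Next I would induct on $k$, the cases $k\le2$ being immediate ($H$ is then edgeless or complete bipartite on at most four vertices, and the claim is easy to verify there). For $k\ge3$, pick a singleton part $\{v\}$: since $v$ is adjacent to every other vertex, choosing a colour $\alpha\in L(v)$ and deleting $\alpha$ from all remaining lists reduces the problem to $L'$-colouring $H-v=K_{n_1,\dots,n_{k-1}}$ with lists of size $\ge k-1$, a complete $(k-1)$-partite graph on $|V(H)|-1$ vertices. Whenever $|V(H)|-1\le(k-1)+\sqrt{2(k-1)}$ the inductive hypothesis finishes the argument for every choice of $\alpha$, and an elementary estimate shows this can fail only when $2k$ is a perfect square or one more than a perfect square, and then only if $H$ already has the maximum number $k+\lfloor\sqrt{2k}\rfloor$ of vertices.

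The hard part will be precisely these boundary configurations, where a blind deletion of a singleton costs the unit of slack the induction needs; there one must instead establish directly the combinatorial core of the theorem: a complete multipartite graph sitting at the extreme $|V(H)|=k+\lfloor\sqrt{2k}\rfloor$ — whose non-trivial parts therefore have total excess at most $\sqrt{2k}$ and so are both few and individually small — does admit the required system of pairwise disjoint minimal transversals. The natural route is a Hall-type argument: first build a system of distinct representatives for the many singleton parts (Hall's condition holds there because each list has size at least $k$, which is at least the number of parts), then fit transversals for the at most $\sqrt{2k}$ non-trivial parts into the remaining colours, reserving in advance the ``popular'' colours forced upon the parts of size $2$ and $3$. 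Making this packing go through is exactly where the quantity $\sqrt{2k}$ is used and not anything larger: a part of size $j$ may need up to $j$ colours to itself, i.e.\ $j-1$ more than a singleton, the total such excess $\sum_i(n_i-1)$ is bounded by $\sqrt{2k}$, and that bound is exactly what a careful greedy selection can absorb.
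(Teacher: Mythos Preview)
A framing point first: Theorem~A is quoted from Ohba and not proved directly in this paper; the paper's own Theorem~\ref{mth2} with $G=K_2$ yields only the weaker hypothesis $|V(H)|\le\chi(H)+\sqrt{\chi(H)}$, despite the remark after Corollary~\ref{cl1}. That said, the machinery behind Theorem~\ref{mth2} \emph{is} Ohba's argument in generalised form, so that is the relevant comparison.

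Your reduction to complete multipartite $H$ and the existence of a singleton part for $k\ge3$ are correct, and you have correctly located where your induction stalls. The gap is that your proposal does not actually resolve that case. Deleting a singleton together with one colour drops $|V(H)|$ and $k$ each by~$1$; when $|V(H)|=k+\lfloor\sqrt{2k}\rfloor$ and $\lfloor\sqrt{2k}\rfloor^{2}\in\{2k-1,2k\}$ the residual graph violates the inductive hypothesis for \emph{every} choice of removed colour, so a direct argument is forced. Your sketch for that step---take an SDR on the singleton parts, then fit pairwise-disjoint transversals for the at most $\sqrt{2k}$ non-trivial parts into the leftover colours---is not a proof: you give no mechanism for choosing the SDR so that the residual lists on the large parts still admit the required disjoint transversals, and an arbitrary SDR need not leave enough room (two size-$3$ parts can be forced to compete for the same small pool of residual colours). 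Asserting that the $\sqrt{2k}$ budget is ``exactly what a careful greedy selection can absorb'' restates the theorem rather than proving it.

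The paper, following Ohba, sidesteps the boundary issue by inducting in the opposite direction: it peels off the \emph{largest} class $V_1$ rather than a singleton. Lemma~\ref{l2}, proved by induction on $|V_1|$ via the Hall-type obstruction of Lemma~\ref{l1} and a maximal-bad-set decomposition, shows that if $H\setminus V_1$ is $(k-1)$-choosable and $(n_1-1)\,|V(H)|\le n_1 k$ then $H$ is $k$-choosable; Lemma~\ref{l4} checks that this arithmetic condition propagates down the induction, and Theorem~\ref{mth2} verifies it at the outset from the size bound on $|V(H)|$. No separate boundary case ever appears: the single Hall/maximality argument inside Lemma~\ref{l2} does the combinatorial work uniformly. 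If you want your singleton-removal scheme to succeed, you would need an analogue of Lemma~\ref{l2} tailored to adding a single vertex, or else a genuinely complete treatment of the extremal configurations---neither of which your outline supplies.
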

\begin{alphtheorem}\label{thm2}{\rm\cite{ohba2002chromatic}}
	If $|V(H)|+|V(G)|\leq  \chi(H)+\chi(G)+\sqrt{2 (\chi(H)+\chi(G))}$, then:
	\[\chi_L(H\oplus G)=\chi(H)+\chi(G).\]   
\end{alphtheorem}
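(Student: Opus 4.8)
The plan is to deduce this statement directly from Theorem~\ref{thm1} by applying the latter to the join $F\isdef H\oplus G$. The key observation is that the two quantities occurring in the hypothesis of Theorem~\ref{thm1} — the order and the chromatic number — are both additive under the join operation, so that the hypothesis of Theorem~\ref{thm2} is literally the hypothesis of Theorem~\ref{thm1} written out for the single graph $F$.

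First I would record the two elementary identities $|V(F)|=|V(H)|+|V(G)|$ and $\chi(F)=\chi(H)+\chi(G)$. The first is immediate, since forming the join introduces no new vertices. For the second, note that in $F$ every vertex of $H$ is adjacent to every vertex of $G$, so in any proper colouring of $F$ the set of colours appearing on $V(H)$ is disjoint from the set appearing on $V(G)$, and the two restrictions are proper colourings of $H$ and of $G$; this gives $\chi(F)\ge\chi(H)+\chi(G)$. Conversely, superimposing optimal proper colourings of $H$ and $G$ that use disjoint palettes produces a proper colouring of $F$ with $\chi(H)+\chi(G)$ colours, so equality holds.

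Next I would substitute these identities into the hypothesis of Theorem~\ref{thm2}: the assumption $|V(H)|+|V(G)|\le\chi(H)+\chi(G)+\sqrt{2(\chi(H)+\chi(G))}$ is exactly $|V(F)|\le\chi(F)+\sqrt{2\chi(F)}$. Applying Theorem~\ref{thm1} to $F$ then gives $\chi_L(F)=\chi(F)$, i.e. $\chi_L(H\oplus G)=\chi(H)+\chi(G)$, which is the claim. Equivalently, one can split the desired equality into $\chi_L(F)\ge\chi(F)$, valid for every graph (as recalled before Theorem~\ref{thm1}), and $\chi_L(F)\le\chi(F)$, which is the content of Theorem~\ref{thm1}; either way the reduction is the same.

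There is essentially no serious obstacle here: the only non-formal ingredient is the classical additivity $\chi(H\oplus G)=\chi(H)+\chi(G)$, and the rest is bookkeeping. If one insisted on a proof not routed through Theorem~\ref{thm1}, one would have to reproduce Ohba's argument — fixing a suitable assignment of the given lists to prospective colour classes and running the accompanying counting/probabilistic selection step — directly on $H\oplus G$; but with Theorem~\ref{thm1} already in hand, the short reduction above is the natural route.
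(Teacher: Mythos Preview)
Your reduction is correct: the additivity $|V(H\oplus G)|=|V(H)|+|V(G)|$ and $\chi(H\oplus G)=\chi(H)+\chi(G)$ turns the hypothesis of Theorem~\ref{thm2} into that of Theorem~\ref{thm1} applied to $H\oplus G$, and the conclusion follows immediately.

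Note, however, that the paper does not supply its own proof of Theorem~\ref{thm2}; it is quoted from Ohba's paper~\cite{ohba2002chromatic} as background. So there is no in-paper argument to compare against. That said, your derivation is exactly the standard one (and is how Ohba himself presents Theorem~\ref{thm2} as an immediate corollary of Theorem~\ref{thm1}), so nothing is missing.
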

One can refer to \cite{ kozik2014towards, kierstead2016choice, xu2018list, mudrock2018list} and \cite{zhu2021chromatic}  and their references for further studies about $L$-coloring of graph.
\subsection{Conditional Coloring} For given graph $H$ and graphical property $P$, the conditional chromatic number $\chi(H,P)$  of $H$, is the smallest number $k$, so that $V(H)$ can be decomposed into  sets $V_1,V_2\ldots, V_k$, in which  $H[V_i]$ satisfies the property $P$, for each $1\leq i\leq k$. This extension of graph coloring was stated by Harary in 1985~\cite{MR778402}. A  particular state, when $P$ is the property of being acyclic, $\chi(H,P)$ is said  the vertex arboricity of $H$. The vertex arboricity   of $H$ is shown  $\alpha(H)$ and is defined as the last number of subsets in a partition of the vertex set of $H$, so that any subset induces an acyclic subgraph. When $P$ is the property that  each color class contains no copy of $G$, we write $\chi_{G}(H)$ instead of  $\chi(G,P)$, which is called the $G$-free chromatic number. Due to this, we say a graph $H$ has a $k$-$G$-free coloring if there exists a map $g : V(H) \longrightarrow \{1,\ldots,k\}$, so that each of the  color classes of $g$ be $G$-free.

We use $\alpha_L(H)$ to denote the list vertex arboricity of $H$, which is the least integer $k$, such that there exists an acyclic $L$-coloring for each list assignment $L$ of $H$, in which $k\leq |L(v)|$. So, $\alpha(H)\leq \alpha_L(H) $ for any graph $H$. It has been proved that for each  graph $H$, $\alpha(H)\leq \lceil \frac{\Delta(H)+1}{2}\rceil$~\cite{Chartrand}, and $\alpha_L(H)\leq \lceil \frac{\Delta(H)+1}{2}\rceil$~\cite{ xue2012list}. When $H$ is not a complete  graph or a cycle of odd order, then  $\alpha(H)\leq \lceil \frac{\Delta}{2}\rceil$\cite{Kronk}. Also, it has  been shown that  $\alpha(H)\leq 3, \alpha_L(H)\leq 3$ if $H$ be a planar graph,  \cite{Chartrand,Hedet, xue2012list}. As a generalized result of Theorem \ref{thm1} and \ref{thm2}, Lingyan Zhe, and  Baoyindureng Wu have proven the following theorem \cite{zhen2009list}.
\begin{alphtheorem}{\rm\cite{zhen2009list}}\label{m2}
 If $|V(H)|\leq 2\alpha(H)+\sqrt{2 \alpha(H)}-1$, then:
 \[\alpha_L(H)=\alpha(H).\]
\end{alphtheorem}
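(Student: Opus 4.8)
The plan is to adapt Ohba's argument for Theorem~\ref{thm1}, replacing ``independent colour class'' by ``colour class inducing a forest'' throughout. The inequality $\alpha_L(H)\ge\alpha(H)$ is automatic (an acyclic $L$-colouring for the constant assignment $L\equiv\{1,\dots,k\}$ is an ordinary acyclic $k$-colouring), so with $k:=\alpha(H)$ it suffices to produce an acyclic $L$-colouring for every list assignment with $|L(v)|\ge k$; discarding surplus colours, we may take $|L(v)|=k$ for all $v$. Next, replace $H$ by an edge-maximal graph $\widehat H\supseteq H$ on $V(H)$ with $\alpha(\widehat H)=k$: this is harmless because an induced subgraph of a forest is a forest, so an acyclic $L$-colouring of $\widehat H$ restricts to one of $H$. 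For an optimal partition $V(\widehat H)=V_1\cup\dots\cup V_k$ into forests, edge-maximality forces all edges between distinct parts, and it forces every part of size $2$ to carry its edge; in particular, if all parts have size at most $2$ then $\widehat H=K_{2k}$ or $\widehat H=K_{2k-1}$.

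I would then induct on $|V(H)|$, and first dispose of two situations that need no new idea. If $\Delta(H)\le 2k-1$, then $\alpha_L(H)\le\lceil(\Delta(H)+1)/2\rceil\le k$ by the cited list vertex arboricity bound (this covers $k=1$, where the hypothesis gives $|V(H)|\le 1+\sqrt2$). If $\widehat H=K_{2k}$ or $K_{2k-1}$, then every acyclic colouring of a complete graph has all classes of size at most $2$, and the claim reduces to $\alpha_L(K_n)=\lceil n/2\rceil$; the latter is easy — if some colour lies in two lists, colour those two vertices with it and recurse on $K_{n-2}$ with that colour deleted, and otherwise the lists are pairwise disjoint and any rainbow selection works. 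Henceforth assume $\Delta(H)\ge 2k$ and that some part has size at least $3$; fix $w$ with $\deg_H(w)\ge 2k$, so $w$ has at most $\sqrt{2k}-2$ non-neighbours and is almost universal.

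The engine of the induction is the following reduction. Suppose we can find a colour $c$ and a set $I\subseteq V(H)$ with (i) $H[I]$ a forest, (ii) $c\in L(v)$ for every $v\in I$, (iii) $\alpha(H-I)=k-1$ (e.g.\ because $I$ contains a whole part $V_j$, with any extra vertices of $I$ chosen so that the arboricity does not drop below $k-1$), and (iv) $|I|\ge 3$. Colour every vertex of $I$ with $c$, delete $I$, and delete $c$ from every surviving list (still of size $\ge k-1$). Then
\[
|V(H-I)|=|V(H)|-|I|\ \le\ (2k+\sqrt{2k}-1)-3\ <\ 2(k-1)+\sqrt{2(k-1)}-1 ,
\]
the last step because $\sqrt{2k}-\sqrt{2k-2}<1$, so the induction hypothesis applies to $H-I$ with parameter $k-1$; combined with $c$ on $H[I]$ this gives an acyclic $L$-colouring of $H$, since the $c$-class is exactly the forest $H[I]$ and every other class lies inside $H-I$, where it induces a forest.

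The crux — and the only place the precise bound $|V(H)|\le 2k+\sqrt{2k}-1$ is used in full — is manufacturing such a pair $(c,I)$. The intended mechanism is a counting/Hall argument: the lists are plentiful ($\sum_v|L(v)|=k|V(H)|$) while $|V(H)|$ is tightly controlled by $k$, so some colour $c$ has a large ``available set''; using in addition the almost-universal vertex $w$, one tries to find a part $V_j$ of the optimal partition all of whose lists contain $c$ (automatic when $|V_j|=1$), or failing that a small part together with enough non-neighbours of $w$ carrying $c$, and to pad the chosen set up to size $3$ while keeping the induced subgraph a forest and the arboricity dropping by exactly one rather than more. Unlike the complete-graph case, where a colour shared by two vertices suffices, here a part of size $\ge 3$ can only be removed once \emph{all} of its lists share a colour, which is precisely what an adversarial list assignment resists; carrying the selection through for every pattern of part sizes is the technical heart, and everything else is bookkeeping.
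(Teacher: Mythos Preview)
Your outline leaves the decisive step unproved. You need, for an arbitrary $k$-list assignment, a colour $c$ and a set $I$ with $H[I]$ a forest, $|I|\ge 3$, $c\in\bigcap_{v\in I}L(v)$, and $\alpha(H-I)=k-1$; you yourself say that ``carrying the selection through for every pattern of part sizes is the technical heart,'' and then stop. The sketch you offer---a popular colour together with an almost-universal vertex $w$---does not obviously produce such an $I$: an adversary can arrange that no colour lies in every list of any fixed part of size $\ge 3$, and padding a smaller part up to size $3$ while simultaneously keeping $H[I]$ acyclic, keeping $c$ in every list, and ensuring $\alpha$ drops by exactly one (not more) is not routine bookkeeping. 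Until that construction is actually supplied, the argument is a framework rather than a proof.

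The paper reaches this theorem by a different route that sidesteps the difficulty entirely. It proves the general $G$-free statement (Theorem~\ref{mth2}) and recovers the present result as the special case $\delta=2$ (see the remark following Corollary~\ref{cl1}). The engine is Hall's marriage theorem rather than colour extraction: Lemma~\ref{l1} shows that if $H$ has no acyclic $L$-colouring then some $S\subseteq V(H)$ satisfies $|S|>2|L(S)|$; Lemma~\ref{l2} is a join lemma showing that adjoining a single forest class to a $k$-choosable graph yields a $(k{+}1)$-choosable graph under a mild numeric condition; and Lemma~\ref{l4} runs the induction on $\alpha(H)$, peeling off the \emph{largest} colour class $V_1$ at each step. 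The hypothesis $|V(H)|\le 2\alpha(H)+\sqrt{2\alpha(H)}-1$ is used only to bound $n_1=|V_1|$ and verify $(n_1-1)|V(H)|\le 2n_1\alpha(H)$. At no point does one need a single colour to appear in every list of a prescribed part---the Hall argument handles the list-matching globally---which is precisely the obstacle your scheme runs into.
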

Assume that each vertex $v\in V(H)$ is assigned a set $L(V)$ of colors, told a color list. Set $c(L) = \{c(v): v\in V(H)\}$. An $L$-coloring $c$ is called  $G$-free,  so that:
\begin{itemize}
	\item  	$c(v)\in L(v)$ for each $v\in V(H)$.
	\item  $ H[V_i]$ is $G$-free for each $i=1,2,\ldots, L$.
	
\end{itemize}
If there exists an $L$-coloring of $H$, then $H$ is said to be $L$-$G$-free-colorable. A graph $H$ is said $k$-$G$-free-choosable if there exists an $L$-coloring for any list-assignment $L$ satisfying  $|L(V)|\geq k$ for each $v\in V(H)$, and  $ H[V_i]$ be $G$-free for each $i=1,2,\ldots, L$. For given two graphs $H$ and $G$, the $\chi_G^L(H)$ of $H$ is the minimum integer $k$, if $H$ be $k$-$G$-free-choosable.

In this article, we shall use Ohba’s idea to show  similar results in terms of $G$-free coloring,
 and list $G$-free coloring  of graphs. In particular, in this article, we prove the subsequent results.

\begin{theorem}\label{mth1}  
	Suppose that $H$, $H'$, and $G$  are three graphs, where $\delta(G)=\delta$,  $H$ is a $k$-$G$-free choosable, and $H'$ is a $k'$-$G$-free choosable. Suppose that $S$ and $S'$ be the maximum subsets of $V(H)$ and $V(H')$, respectively, so that $H[S]$ is $G$-free and  $H'[S']$ is $G$-free. In this case, if either $(|S'|-1)(|V(H)|+|S'|)\leq |S'|\delta (k+1)$ or $(|S|-1)(|V(H')|+|S|)\leq |S|\delta (k'+1)$, then $H\oplus H'$ is a $(k+k')$-$G$-free-choosable, that is: \[\chi_G^L(H\oplus H')\leq \chi_G^L(H)+\chi_G^L(H')=k+k'.\]	
\end{theorem}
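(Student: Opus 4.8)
Assume without loss of generality that the first inequality holds, the other case being symmetric under interchanging $(H,k,S)$ with $(H',k',S')$. Write $n=|V(H)|$, $n'=|V(H')|$, $s'=|S'|$ and $\delta=\delta(G)$, and recall that $|V(G)|\ge\delta+1$. Let $L$ be any list assignment on $V(H\oplus H')$ with $|L(v)|\ge k+k'$ everywhere; after discarding colours we may assume $|L(v)|=k+k'$ for all $v$, and that the set $\mathcal C$ of all colours is finite. We must produce an $L$-colouring of $H\oplus H'$ whose every colour class is $G$-free; the strategy is to use almost all colours separately on $H$ and on $H'$, and to absorb the unavoidable collisions into colour classes so small --- at most $\delta$ vertices --- that they cannot contain a copy of $G$.

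Two observations drive the proof. First, in $H\oplus H'$ each vertex of $H$ is joined to each vertex of $H'$, so a colour class $V_c$ induces $H[A_c]\oplus H'[B_c]$, where $A_c=V_c\cap V(H)$ and $B_c=V_c\cap V(H')$; hence $V_c$ is $G$-free whenever $B_c=\emptyset$ and $H[A_c]$ is $G$-free, whenever $A_c=\emptyset$ and $H'[B_c]$ is $G$-free, or whenever $|A_c|+|B_c|\le\delta$, since then $|V_c|\le\delta<|V(G)|$. Second, induced subgraphs of a $k$-$G$-free-choosable graph are again $k$-$G$-free-choosable, so any restriction of $L$ to sublists of size $\ge k$ on $H$ (or on an induced subgraph of it) admits a $G$-free colouring, and similarly with $k'$ and $H'$.

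The plan is to split the colours. I would choose $C_1\subseteq\mathcal C$, to be used only on $V(H)$, with $|L(v)\cap C_1|\ge k$ for every $v\in V(H)$, and put $Y=\{w\in V(H')\colon|L(w)\setminus C_1|<k'\}$, so that every $w\in Y$ has $|L(w)\cap C_1|\ge k+1$. Colour $V(H)$ by $k$-$G$-free-choosability of $H$ using the lists $L(v)\cap C_1$; colour $V(H')\setminus Y$ by $k'$-$G$-free-choosability of $H'[V(H')\setminus Y]$ using the lists $L(w)\setminus C_1$ (of size $\ge k'$); finally colour the vertices of $Y$ one at a time, giving each $w\in Y$ a colour of $L(w)\cap C_1$ whose current class has at most $\delta-1$ vertices. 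Since the classes of colours in $C_1$ hold at most $n+|Y|$ vertices in total, at most $(n+|Y|)/\delta$ of them ever reach size $\delta$, so the greedy step succeeds as long as $n+|Y|<\delta(k+1)$. In the resulting colouring every class of a colour outside $C_1$ is contained in $V(H')$, and every class of a colour in $C_1$ either is contained in $V(H)$ or has at most $\delta$ vertices; by the first observation all classes are $G$-free.

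The entire difficulty is therefore concentrated in the choice of $C_1$: one needs it large enough that $|L(v)\cap C_1|\ge k$ for all $v\in V(H)$, yet light enough with respect to the lists on $V(H')$ that $|Y|$ stays below $\delta(k+1)-n$. This balancing is the step I expect to be the main obstacle, and it is where the hypothesis is used. The bound on $|Y|$ should come from a double count of colour--vertex incidences on $V(H')$ --- using that a $G$-free colouring of $H'$ has colour classes of size at most $s'$, so that a single colour can ``serve'' at most $s'$ vertices of $H'$ --- against the number of colours that $C_1$ is forced to contain in order to $k$-cover the $(k+k')$-element lists on $V(H)$; tracking both quantities through the $k+1$ ``layers'' of colours in a list should show that a suitable $C_1$ exists exactly when $(s'-1)(n+s')\le s'\delta(k+1)$, which is the first hypothesis. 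The extreme cases $s'=1$ (where $H'$ already forces $n'$ distinct colours, leaving no slack) and $k=\chi_G(H)$ (where $C_1$ must be chosen as tightly as possible) would need separate verification, after which the construction of the previous paragraph finishes the proof.
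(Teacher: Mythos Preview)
Your framework is appealing and genuinely different from the paper's, but the proof is incomplete: the existence of the colour set $C_1$ is asserted, not established, and this is exactly where the argument lives. You need $|L(v)\cap C_1|\ge k$ for every $v\in V(H)$ while simultaneously keeping $|Y|\le\delta(k+1)-n$; you suggest that a double count involving $s'$ will produce this, but the set $Y$ depends only on the list assignment and on $C_1$, not on the graph structure of $H'$, so it is unclear how the quantity $s'$ --- which measures the size of a largest $G$-free induced subgraph of $H'$ --- can enter a bound on $|Y|$ at all. The phrase ``tracking both quantities through the $k+1$ layers of colours'' is a hope rather than an argument, and without it the hypothesis $(s'-1)(n+s')\le s'\delta(k+1)$ is never invoked. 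The remark that the extreme cases $s'=1$ and $k=\chi_G(H)$ ``would need separate verification'' confirms that the central construction has not been carried out.

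The paper proceeds quite differently. It first proves the special case $k'=1$ (Lemma~\ref{l2}), where $H'$ itself is $G$-free: a Hall-type matching lemma (Lemma~\ref{l1}) shows that if $H\oplus H'$ is not $L$-$G$-free-colourable then some $S\subseteq V(H\oplus H')$ satisfies $|S|>\delta|L(S)|$, and an induction on $|V(H')|$ together with the numerical hypothesis rules this out. For general $H'$, the paper greedily decomposes $V(H')$ into $G$-free parts $V'_1,\dots,V'_{k'}$ of non-increasing sizes, verifies (Claim~\ref{c4}) that each $V'_i$ still satisfies the numerical inequality relative to $H$ and $k$, and iterates Lemma~\ref{l2}. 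If you wish to salvage your colour-splitting approach you would need an honest construction of $C_1$ --- perhaps probabilistic, perhaps a greedy on colours --- together with a mechanism that actually ties $s'$ to the size of $Y$; as written, that bridge is missing.
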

 \begin{theorem}\label{mth2}
	If $|V(H)|\leq \delta \chi_G(H)+\sqrt{\delta \chi_G(H)}-(\delta-1)$, then:
	\[\chi_G^L(H)=\chi_G(H).\]  
\end{theorem}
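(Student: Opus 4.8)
The plan is as follows. The inequality $\chi_G(H)\le\chi_G^L(H)$ always holds, since a $G$-free $L$-colouring of $H$ for the constant list $\{1,\dots,\chi_G^L(H)\}$ is a $G$-free colouring with that many colours; so the content is to prove that $H$ is $k$-$G$-free-choosable, where $k:=\chi_G(H)$. I would argue by contradiction, choosing a counterexample $(H,L)$ — with $H$ satisfying the hypothesis, $|L(v)|\ge k$ for all $v$, and $H$ not $L$-$G$-free-colourable — for which $n:=|V(H)|$ is minimum, and among these for which $\sum_v|L(v)|$ is minimum. (We may clearly assume $\delta:=\delta(G)\ge 1$.) Two routine reductions come first. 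Minimality of $\sum_v|L(v)|$ lets us assume $|L(v)|=k$ for every $v$. Next, since $\delta\ge 1$ forces $|V(G)|\ge 2$, every one-vertex set is $G$-free; hence if the family $(L(v))_{v\in V(H)}$ has a system of distinct representatives the resulting rainbow colouring is $G$-free, a contradiction. So by Hall's marriage theorem there is a minimal set $W\subseteq V(H)$ with $\bigl|\bigcup_{v\in W}L(v)\bigr|<|W|$; minimality makes this a clean deficiency, $\bigl|\bigcup_{v\in W}L(v)\bigr|=|W|-1$, and since each list has size $k$ and lies inside $\bigcup_{v\in W}L(v)$ we get $|W|\ge k+1$.

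The engine of the argument is the combinatorial meaning of $\delta$. Fix an optimal $G$-free partition $V(H)=V_1\cup\cdots\cup V_k$, ordered so that $|V_1|\ge\cdots\ge|V_k|\ge 1$; then $\sum_i(|V_i|-1)=n-k$, which the hypothesis bounds by $(\delta-1)(k-1)+\sqrt{\delta k}$. The elementary fact I would lean on repeatedly is: if a vertex $v$ has at most $\delta-1$ neighbours inside a $G$-free set $S$, then $S\cup\{v\}$ is still $G$-free, because every vertex of $G$ has degree at least $\delta$, so $v$ cannot lie in a copy of $G$ within $S\cup\{v\}$ while $S$ itself is $G$-free. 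This is the replacement, in the $G$-free world, for the adjacency accounting Ohba does in ordinary list colouring: deleting a colour and recolouring the vertices that used it can damage $G$-freeness only through vertices having at least $\delta$ neighbours of their new colour, and a vertex with fewer than $\delta$ neighbours in a class may be moved into it freely.

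The heart follows Ohba's strategy, in the form used by Zhen and Wu for vertex arboricity — the case $\delta=2$ of the present statement (cf.\ Theorem \ref{m2}). The argument splits according to whether a colour class can be economically peeled off. If some class $V_i$ has a colour $c$ with $c\in L(v)$ for all $v\in V_i$ and $|V_i|$ is large enough — one needs $|V_i|\ge\delta+\sqrt{\delta k}-\sqrt{\delta(k-1)}$, i.e.\ essentially $|V_i|\ge\delta+1$ for $k\ge 2$ — then colour $V_i$ with $c$, delete $c$ from all remaining lists, and apply minimality to $H-V_i$: its lists have size $\ge k-1$, it has $\chi_G(H-V_i)=k-1$ (witnessed by the remaining parts $V_j$, $j\ne i$), and it meets the hypothesis for $k-1$ by the choice of $|V_i|$ together with the estimate $\sqrt{\delta k}-\sqrt{\delta(k-1)}<1$; since no vertex of $H-V_i$ received colour $c$, the colour class of $c$ in the combined colouring is exactly $V_i$, which is $G$-free — a contradiction. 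When no such peel is available the lists are forced to overlap in a controlled way, and one works instead with the minimal Hall-violator $W$: since $\chi_G(H[W])\le\chi_G(H)=k$ and $W$ has exactly $|W|-1$ colours available, $H[W]$ can be $G$-free-coloured from its own lists, after which $H-W$ is finished by minimality once each outside list has been pruned of the (at most $\delta-1$ per vertex, by the lemma above) colours that could carry a copy of $G$ across the cut.

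I expect the decisive obstacle to be the exact quantitative matching of the constant $\sqrt{\delta k}$ through these reductions. In Ohba's proof the term $\sqrt{2k}$ appears because a part of size $a$ generates a cost of order $\binom a2$ that the $k$ colours must absorb, forcing $a$ to be of order at most $\sqrt{2k}$; the factor $\delta$ coming from the ``at most $\delta-1$ neighbours'' lemma rescales this to $\sqrt{\delta k}$, while the term $-(\delta-1)$ is precisely the slack that allows a single-colour peel — which can reduce $k$ by one but $n$ by only a little — to stay inside the hypothesis. Making this airtight seems to require a several-case analysis in the spirit of the original proofs, and a secondary delicate point is showing, in the overlap branch, that the $G$-free colouring of $H[W]$ can be chosen so that every outside vertex simultaneously sees at most $\delta-1$ forbidden colours; this is exactly where the minimality of $W$, as opposed to an arbitrary deficient set, must be used.
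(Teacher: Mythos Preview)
Your approach diverges from the paper's and carries a genuine gap in the ``overlap'' branch. The paper does \emph{not} use a system-of-distinct-representatives argument, nor does it ever invoke the ``$\le\delta-1$ neighbours'' observation you make your engine. Instead, its key device is a \emph{$\delta$-fold} Hall argument (Lemma~\ref{l1}): build a bipartite graph between $V(H)$ and $\delta$ disjoint copies of the colour set, so that a saturating matching assigns each colour to at most $\delta$ vertices; since $|V(G)|\ge\delta+1$, every colour class of size $\le\delta$ is automatically $G$-free. The failure of such a matching yields a set $S$ with $|S|>\delta\,|L(S)|$, a far stronger deficiency than your $|W|>|L(W)|$. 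This inequality is what powers Lemma~\ref{l2} (the join-with-one-$G$-free-part step) and then Lemma~\ref{l4}, after which the proof of Theorem~\ref{mth2} is a short numerical check that the hypothesis forces $n_1\le\sqrt{\delta\chi_G(H)}+\delta$ and hence $(n_1-1)|V(H)|\le n_1\delta\chi_G(H)$.

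Concretely, your overlap branch does not close. First, ``$H[W]$ can be $G$-free-coloured from its own lists'' is exactly the kind of statement you are trying to prove; to appeal to minimality you would need $H[W]$ to satisfy the hypothesis with parameter $\chi_G(H[W])$, and nothing in your setup guarantees that. Second, the parenthetical ``at most $\delta-1$ per vertex'' is not what your neighbour lemma gives: that lemma says a colour $c$ is safe for $v$ provided $v$ has at most $\delta-1$ neighbours of colour $c$, so a colour is \emph{forbidden} when $v$ has $\ge\delta$ such neighbours; the number of forbidden colours is therefore at most $\lfloor |N(v)\cap W|/\delta\rfloor$, a degree-dependent quantity, not $\delta-1$. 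There is no reason this is bounded uniformly by anything useful, and in particular no way to ensure each pruned list still has size $\ge k-1$. The paper sidesteps both issues entirely: the $\delta$-fold deficiency $|S|>\delta|L(S)|$ replaces your $|W|>|L(W)|$, and the induction proceeds by peeling the \emph{largest} colour class via Lemma~\ref{l2} rather than by analysing a Hall violator directly.
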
	
\begin{theorem}\label{mth3}  
For each two connected graphs  $H$ and $G$, we have:  
\[\chi^L_{_G}(H)\leq\lceil \frac{\Delta(H)}{\delta(G)}\rceil+1.\]	
\end{theorem}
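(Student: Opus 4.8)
The plan is to reduce the claim to an extremal colouring argument, in the spirit of the standard proof of the list vertex arboricity bound $\alpha_L(H)\leq\lceil(\Delta(H)+1)/2\rceil$. Write $\delta=\delta(G)$ and $k=\lceil \Delta(H)/\delta\rceil+1$; since $G$ is connected on at least two vertices we have $\delta\geq 1$, and $\delta\cdot\lceil \Delta(H)/\delta\rceil\geq \Delta(H)$. Fix an arbitrary list assignment $L$ with $|L(v)|\geq k$ for every $v\in V(H)$. It suffices to produce a single $G$-free $L$-colouring, because $L$ is arbitrary: this shows $H$ is $k$-$G$-free-choosable, hence $\chi^L_G(H)\leq k$.

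The step that makes $\delta(G)$ the relevant parameter is elementary: if a graph $F$ contains a copy $J$ of $G$, then $\deg_J(u)\geq\delta$ for every $u\in V(J)$, so each such $u$ has at least $\delta$ neighbours in $F$ inside $V(J)$; in particular, if a colour class $V_i$ of some $L$-colouring induces a subgraph containing a copy of $G$, then some vertex of colour $i$ has at least $\delta$ neighbours of colour $i$. Now choose, among all maps $c:V(H)\to\bigcup_{v}L(v)$ with $c(v)\in L(v)$ for all $v$, one minimising $\Phi(c):=|\{uv\in E(H):c(u)=c(v)\}|$ (such maps exist, $\Phi$ is a nonnegative integer, and the set of maps is finite, so a minimiser exists), and set $V_i=c^{-1}(i)$. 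The claim to verify is that every $H[V_i]$ is $G$-free, which completes the proof. If not, fix $i$ and, by the remark above, a vertex $v$ with $c(v)=i$ having at least $\delta$ neighbours in $V_i$; let $n_j$ be the number of neighbours of $v$ in $V_j$, so $n_i\geq\delta$ and $\sum_j n_j=\deg_H(v)\leq\Delta(H)$. The set $L(v)\setminus\{i\}$ contains at least $k-1=\lceil\Delta(H)/\delta\rceil$ colours; if $n_j\geq\delta$ held for all of them, then $\deg_H(v)\geq n_i+\sum_{j\in L(v)\setminus\{i\}}n_j\geq\delta+\delta\lceil\Delta(H)/\delta\rceil\geq\delta+\Delta(H)>\Delta(H)$, a contradiction. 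Hence some $j^\ast\in L(v)\setminus\{i\}$ satisfies $n_{j^\ast}\leq\delta-1<n_i$, and recolouring $v$ by $j^\ast$ (legal, since $j^\ast\in L(v)$) affects only the monochromatic edges at $v$ and strictly decreases $\Phi$, contradicting minimality.

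The argument is routine once arranged this way, and I do not expect a genuine obstacle; the only points needing a line of care are the degenerate cases — $G=K_1$ or $\delta(G)=0$ are ruled out by the connectedness of $G$ as soon as $|V(G)|\geq 2$, and if $\Delta(H)=0$ then $k=1$, $H$ is edgeless, and any colouring works — and checking that the ``$+1$'' in the definition of $k$ is used exactly where needed: it is precisely this extra available colour that forces $n_i+\sum_{j\in L(v)\setminus\{i\}}n_j$ to overshoot $\Delta(H)$ in the pigeonhole step.
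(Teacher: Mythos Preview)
Your proof is correct, but it follows a genuinely different route from the paper's. The paper argues greedily: fix an arbitrary ordering $v_1,\dots,v_n$ of $V(H)$ and colour the vertices one at a time, observing that when $v_{i+1}$ is reached, at most $\lfloor\Delta(H)/\delta\rfloor$ colours can appear on $\geq\delta$ of its already-coloured neighbours, so some colour in $L(v_{i+1})$ is safe (any new copy of $G$ in that colour class would have to contain $v_{i+1}$ with degree $\geq\delta$ inside the class). You instead run an extremal argument: pick an $L$-colouring minimising the number of monochromatic edges, and if some colour class contains a copy of $G$, recolour a vertex of that class having $\geq\delta$ same-coloured neighbours to a colour in its list with $\leq\delta-1$ such neighbours, strictly decreasing the count.

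Both proofs rest on the same two ingredients --- every vertex of a copy of $G$ has $\geq\delta$ neighbours inside that copy, and the pigeonhole $\delta\cdot\lceil\Delta(H)/\delta\rceil\geq\Delta(H)$ --- so neither is deeper than the other. The greedy version is explicitly algorithmic; your extremal version is slightly cleaner to write and sidesteps any discussion of vertex orderings, and the recolouring template generalises readily to related defective-colouring statements. Your handling of the degenerate cases ($G=K_1$, $\Delta(H)=0$) is also more careful than the paper's.
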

\begin{theorem}\label{mth4}
	Assume that  $\R$ be a collection of all $d$-regular graphs. For each  arbitrary graph $H$, there exists a non-negative integer $n'$, so that $\chi_{\R}(H\oplus K_n)=\chi^L_{\R}(H\oplus K_n)$, where $n$ is integer so that  for which $n'\leq n$.	 
\end{theorem}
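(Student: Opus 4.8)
The plan is to reduce the statement to an Ohba-type size condition, exactly in the spirit of Theorem~\ref{mth2}. One of the two inequalities is automatic: for the constant list assignment $L(v)=\{1,\dots,k\}$ an $\R$-free $L$-colouring is nothing but an $\R$-free $k$-colouring, so $\chi_\R(H\oplus K_n)\le\chi^L_\R(H\oplus K_n)$ for every $n$. Hence it suffices to produce an $n'$ so that the reverse inequality holds for all $n\ge n'$. Throughout set $h=|V(H)|$ and $m=\chi_\R(H\oplus K_n)$, so $|V(H\oplus K_n)|=h+n$.

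First I would record the elementary bound $dm\ge n$. In $H\oplus K_n$ the $n$ vertices of $K_n$ are pairwise adjacent, so if a colour class of an $\R$-free colouring contained $d+1$ of them, that class would contain $K_{d+1}$, which is $d$-regular and therefore a forbidden subgraph. Thus each of the $m$ colour classes meets $V(K_n)$ in at most $d$ vertices, and summing over classes gives $n\le dm$.

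Next I would invoke the collection version of Theorem~\ref{mth2}. When $\R$ is the family of all $d$-regular graphs the integer $d$ plays the role of $\delta$, since every member of $\R$ has order at least $d+1$, so any vertex set of size at most $d$ induces an $\R$-free subgraph. Running the proof of Theorem~\ref{mth2} with $\R$ in place of $G$ and $d$ in place of $\delta$ should give: for every graph $F$, if $|V(F)|\le d\,\chi_\R(F)+\sqrt{d\,\chi_\R(F)}-(d-1)$, then $\chi^L_\R(F)=\chi_\R(F)$. Granting this, apply it to $F=H\oplus K_n$. Since $t\mapsto t+\sqrt t$ is nondecreasing and $dm\ge n$,
\[
dm+\sqrt{dm}-(d-1)\ \ge\ n+\sqrt{n}-(d-1),
\]
so it is enough to have $h+n\le n+\sqrt n-(d-1)$, that is, $\sqrt n\ge h+d-1$. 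Therefore, taking $n'=(h+d-1)^2=(|V(H)|+d-1)^2$, for every integer $n\ge n'$ the hypothesis of the displayed implication holds for $F=H\oplus K_n$, and we conclude $\chi^L_\R(H\oplus K_n)=\chi_\R(H\oplus K_n)$, as desired.

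The hard part will be the collection version of Theorem~\ref{mth2}: checking that the Ohba-style argument is robust under replacing a single forbidden graph $G$ by the infinite family $\R$ of all $d$-regular graphs. Concretely one needs that the proof uses only (a) the fact that any set of fewer than $d+1$ vertices is automatically $\R$-free, and (b) the existence of a sufficiently large $\R$-free vertex subset to start the induction, together with the fact that the colour exchanges performed along the relevant system of distinct representatives never create a $d$-regular subgraph inside a class. If the proof of Theorem~\ref{mth2} already isolates these ingredients, then the present theorem follows along the lines above, with the elementary inequality $dm\ge n$ doing all the remaining work and the choice of $n'$ being just the displayed bookkeeping. (For $d=1$, where $\R$-freeness means being edgeless, the same estimate applies and recovers an Ohba-type statement for $H\oplus K_n$.)
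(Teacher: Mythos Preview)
Your proposal is correct and follows essentially the same strategy as the paper. Both arguments (i) bound $\chi_\R(H\oplus K_n)$ from below via the observation that $K_{d+1}\in\R$ forces each colour class to meet $V(K_n)$ in at most $d$ vertices (the paper isolates this as Lemma~\ref{le5}, obtaining the slightly sharper $d\,\chi_\R(H\oplus K_n)\ge \chi_\R(H)+n$), and then (ii) feed this lower bound into an Ohba-type size criterion for sufficiently large $n$. The only difference is cosmetic: you invoke the collection version of Theorem~\ref{mth2}, whereas the paper goes one level deeper and verifies the hypothesis of Lemma~\ref{l4} directly, using in addition the upper bound $n_1\le |V(H)|+d$ on the largest colour class. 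Your route is marginally cleaner and yields an explicit threshold $n'=(|V(H)|+d-1)^2$, which the paper does not record. Your caveat about needing a collection version of Theorem~\ref{mth2} is well placed---the paper itself applies Lemma~\ref{l4} with $\R$ in place of a single $G$ without comment---but the proofs of Lemmas~\ref{l1}, \ref{l2}, \ref{l4} and Theorem~\ref{t3} only use $\delta$ through the fact that any set of at most $\delta$ vertices is automatically admissible, which holds for $\R$ with $\delta=d$ since every $d$-regular graph has at least $d{+}1$ vertices; there are no ``colour exchanges along a system of distinct representatives'' to worry about, only a Hall-matching argument and induction.
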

%%%%%%%%%%%%%%%%%%%%%%%%%%%%%%%%%%%%%%%%%%%%%%%%%%%%%%%%%%%%%%%%%%%%%%%%%%%%%%%%%%%%%%%%%%%%%%%%%%%%%%%%%%%%%%%%%%%%%%%%%%%%%%%%%%%%%%%%%%%%%%%%%%%%%%%%%%%%%%%%%%% 
\section{Proof of the Main results}	
 Before proving the main theorems, we need  some basic results. Suppose that $H$ and $G$  are two graphs, and let $L$ be a list-assignment color to $V(H)$. Assume that $S= {v_1, v_2,\ldots, v_m}\subseteq V(H)$. Set $L(S)= \cup_{i=1}^{i=m}L(v_i)$. Now, we have the following lemma.

%%%%%%%%%%%%%%%%%%%%%%%%%%%%%%%%%%%%%%%%%%%%%%%%%%%%%%%%%%%%%%%%%%%%%%%%%%%%%%%%%%%%%%%%%%%%%%%%%%%%%%%%%%%%%%%%%%%%%%%%%%%%%%%%%%%%%%%%%%%%%%%%%%%%%%%%%%%%%%%%%%%
\begin{lemma}\label{l1}  
Suppose that $H$ and $G$  be two graphs, where $\delta(G)=\delta$. Let $H$ is not $L$-$G$-free colorable. Hence there is a subset of $ V(H)$ say $S$, such that:
\[|S| > \delta|L(S)|.\]
\end{lemma}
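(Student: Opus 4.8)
The plan is to establish the contrapositive: assuming $|S|\le\delta|L(S)|$ for every $S\subseteq V(H)$, I will exhibit an $L$-$G$-free colouring of $H$, which contradicts the hypothesis that $H$ is not $L$-$G$-free colourable. The key observation driving everything is that $\delta(G)=\delta$ forces every vertex of $G$ to have at least $\delta$ neighbours, so $|V(G)|\ge\delta+1$; hence any induced subgraph of $H$ on at most $\delta$ vertices is automatically $G$-free, being simply too small to contain a copy of $G$. Thus it is enough to produce an $L$-colouring of $H$ — not required to be proper — in which no colour is used on more than $\delta$ vertices. This is the natural analogue of the classical lemma for ordinary list colouring, where one instead uses that a rainbow (all-distinct) selection from the lists is automatically a proper colouring.

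To find such a colouring I would form the bipartite graph $B$ with parts $V(H)$ and $\mathcal{C}=L(V(H))$, joining $v$ to $c$ exactly when $c\in L(v)$, and then split each colour vertex $c$ into $\delta$ parallel copies $c^{(1)},\dots,c^{(\delta)}$, each adjacent to the same vertices of $H$ as $c$. Call the resulting bipartite graph $B'$. For any $S\subseteq V(H)$ we have $N_{B'}(S)=\{\,c^{(j)}:c\in L(S),\ 1\le j\le\delta\,\}$, so $|N_{B'}(S)|=\delta|L(S)|\ge|S|$ by assumption, i.e. Hall's condition holds in $B'$. By Hall's marriage theorem there is a matching of $B'$ saturating $V(H)$; assigning to each $v$ the colour whose copy $v$ is matched to yields a map $f\colon V(H)\to\mathcal{C}$ with $f(v)\in L(v)$, and since each copy is used at most once, every colour class $V_i=f^{-1}(i)$ satisfies $|V_i|\le\delta$.

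By the opening remark, $H[V_i]$ then has at most $\delta<|V(G)|$ vertices and therefore contains no copy of $G$, so $f$ is an $L$-$G$-free colouring and $H$ is $L$-$G$-free colourable — contradiction. Hence a subset $S$ with $|S|>\delta|L(S)|$ must exist. The only step requiring any care is the passage through the capacitated (vertex-splitting) form of Hall's theorem and the verification that the hypothesis $|S|\le\delta|L(S)|$ is exactly Hall's condition after the blow-up; both are routine, so I expect no genuine obstacle beyond recognising this framing. One tacitly assumes here $\delta\ge 1$, i.e. $G$ has no isolated vertex, which is in any case needed for $\chi_G(H)$ to be well defined.
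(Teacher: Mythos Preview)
Your argument is correct and essentially identical to the paper's: both take the contrapositive, blow up each colour into $\delta$ copies, verify Hall's condition from the hypothesis $|S|\le\delta|L(S)|$, extract a matching saturating $V(H)$, and colour accordingly so that every class has size at most $\delta$. The only difference is cosmetic: you explicitly record why a class of size $\le\delta$ is $G$-free (namely $|V(G)|\ge\delta+1$), whereas the paper leaves this implicit.
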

\begin{proof}
By contradiction, suppose that $|S| \leq \delta|L(S)|$  for each subset $S$ of $V(H)$. Now, define the bipartite graph $B$ with bipartition $(Y, Y')$, where $Y=V(H)$, and $Y'$ is the $\delta$ copies	 of $L(V(H))$. Also, for any member of $Y$ say $y$, and each member of $Y'$ say $y'$, $yy'\in E(B)$ if $y'\in L(y)$. It is clear to say that $N_B(W)=\delta|L(W)|$, for each $W\subseteq V(H)$. Now, let $S\subseteq V(H)$. By the assumption,  $N_B(S)=\delta|L(S)|\geq |S|$, therefore,   by Hall’s theorem, there exists a matching $M$ that saturates $V(H)$. Consider $y$ and  color $y$ with the  color matched by it in $M$. As $Y'$ is a $\delta$ copy of $L(V(H))$. Any member $y'$ of $L(V(H))$ appears in at most $\delta$ times as an end vertex of some edges of the  $M$. Which means that, any color of $L(H)$ is assigned in at most $\delta$ vertices of $V(H)$. Hence, we achieve a $L$-$G$-free coloring of $V(H)$, which is impossible. Therefore the proof is complete.
\end{proof}
%%%%%%%%%%%%%%%%%%%%%%%%%%%%%%%%%%%%%%%%%%%%%%%%%%%%%%%%%%%%%%%%%%%%%%%%%%%%%%%%%%%%%%%%%%%%%%%%%%%%%%%%%%%%%%%%%%%%%%%%%%%%%%%%%%%%%%%%%%%%%%%%%%%%%%%%%%%%%%%%%%
To prove the following lemma, we  use Ohba’s notion.
\begin{lemma}\label{l2}  
	Suppose that $H$, $H'$, and $G$  are three graphs, where $\delta(G)=\delta$, $|V(H')|=n$, and $H'$ is $G$-free, i.e $G\nsubseteq H'$. If  $H$ be $k$-$G$-free choosable, and $(n-1)(|V(H)|+n)\leq n\delta (k+1)$, then $H\oplus H'$ is  $(k+1)$-$G$-free choosable, that is:
	\[\chi_G^L(H\oplus H')\leq \chi_G^L(H)+1=k+1.\]
\end{lemma}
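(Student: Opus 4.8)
The plan is to mimic Ohba's argument, using Lemma \ref{l1} as the combinatorial engine. Suppose for contradiction that $H \oplus H'$ is not $(k+1)$-$G$-free choosable. Then there is a list-assignment $L$ with $|L(w)| \geq k+1$ for every $w \in V(H \oplus H')$ such that $H \oplus H'$ admits no $L$-$G$-free colouring. Write $V(H') = \{u_1,\dots,u_n\}$. The idea is to \emph{reserve} a single colour for the vertices of $H'$: since $H'$ is $G$-free, colouring all of $V(H')$ with one colour $c_0$ is automatically $G$-free on that class; and because every vertex of $H'$ is adjacent to every vertex of $H$, once $c_0$ is used on $H'$ we must colour $V(H)$ from the lists $L(v) \setminus \{c_0\}$, which still have size $\geq k$, and we would be done by $k$-$G$-free choosability of $H$. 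So no single colour can simultaneously lie in all the lists $L(u_j)$ — more precisely, any colour we try to reserve must be ``blocked'' on $H$ in the sense that removing it from the lists on some subset of $V(H)$ violates the Lemma \ref{l1} deficiency condition. This is where the hypothesis $(n-1)(|V(H)|+n) \leq n\delta(k+1)$ enters.

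The key steps, in order, would be: (i) Fix the bad list-assignment $L$. For each colour $c$ appearing in $\bigcup_j L(u_j)$, attempt the reservation strategy above; failure of every such attempt, via Lemma \ref{l1} applied to $H$ with the punctured lists, yields for each such $c$ a ``witness'' subset $S_c \subseteq V(H)$ with $|S_c| > \delta |L(S_c)\setminus\{c\}|$, i.e. $|S_c| > \delta(|L(S_c)| - 1)$, hence $|S_c| \geq \delta|L(S_c)| - \delta + 1$. (ii) Reconcile these local witnesses: since $H$ itself \emph{is} $k$-$G$-free choosable, for any fixed $c$ we must have $|L(S_c)| \geq k$ on the nose for the inequality to be tight enough; push this to bound the total number of distinct colours available on $V(H')$. (iii) Count: the lists $L(u_1),\dots,L(u_n)$ each have size $\geq k+1$, there are $n$ of them, and the failure of reservation forces the colours appearing on $V(H')$ to be spread thin relative to $V(H)$; extracting a subset $S \subseteq V(H \oplus H')$ that violates Lemma \ref{l1} for the whole graph (with parameter $k+1$) contradicts... wait — rather, the cleanest route is the reverse: \emph{assuming} no $L$-$G$-free colouring exists, invoke Lemma \ref{l1} on $H \oplus H'$ directly to get $S$ with $|S| > \delta|L(S)|$, split $S = (S \cap V(H)) \cup (S \cap V(H'))$, and use $|S \cap V(H')| \leq n$ together with the choosability of $H$ to force $|L(S)|$ to be small, then check that $(n-1)(|V(H)|+n) \leq n\delta(k+1)$ is exactly the arithmetic that rules this out.

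Concretely, for the final contradiction I expect the inequality chain to run as follows. Let $S$ be the Lemma \ref{l1} witness for $H \oplus H'$, put $S_1 = S \cap V(H)$, $S_2 = S \cap V(H')$, so $|S_2| \leq n$ and $|S| = |S_1| + |S_2|$. Every colour in $L(S_2)$ is used at most $\delta$ times among $V(H')$ before any $G$-free class of $H'$ is completed; combined with $|S_1| \leq \delta|L(S_1)| \leq \delta|L(S)|$ (from $k$-choosability of $H$, since if $|L(S_1)| \geq k$ one colours $S_1$ and extends) one gets a bound of the shape $|S| \leq \delta|L(S)| + (\text{slack})$ where the slack is controlled by $n$, $|V(H)|$, and $k$. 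Imposing $|S| > \delta|L(S)|$ then forces $(n-1)(|V(H)|+n) > n\delta(k+1)$, contradicting the hypothesis. The main obstacle will be step (ii)–(iii): carefully bounding $|L(S)|$ from below in terms of $k$ and the structure of the join so that the reserved-colour counting closes exactly at the stated threshold — the join structure ($H'$ complete to $H$) is essential here because it is what lets a colour ``used up'' on $H'$ be subtracted uniformly from every list on $H$, and getting the off-by-one/$\delta$ bookkeeping right is the delicate part.
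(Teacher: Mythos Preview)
Your opening moves are right and match the paper: split on whether $\bigcap_{j} L(u_j)$ is nonempty, and in the nonempty case reserve that common colour for all of $H'$. In the empty case you correctly invoke Lemma~\ref{l1} on $H\oplus H'$ to get a set $S$ with $|S|>\delta|L(S)|$, and your counting for the sub-case $V(H')\subseteq S$ is exactly the paper's: no common colour forces $|L(S)|\geq |L(V')|\geq \tfrac{n}{n-1}(k+1)$, while $|S|\leq |V(H)|+n$, and the hypothesis $(n-1)(|V(H)|+n)\leq n\delta(k+1)$ closes the contradiction.

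The gap is the other sub-case, $V(H')\not\subseteq S$. Your proposed bound $|S_1|\leq \delta|L(S_1)|$ ``from $k$-choosability of $H$'' is not justified: Lemma~\ref{l1} is a one-way implication (absence of a bad set implies colourability), and $k$-$G$-free choosability of $H$ does \emph{not} give a Hall-type inequality on every subset of $V(H)$. Likewise your first route, producing witnesses $S_c$ for each $c\in\bigcup_j L(u_j)$, is off: reservation is only possible for colours in the \emph{intersection} $\bigcap_j L(u_j)$, so when that intersection is empty there are no reservation attempts to fail and no witnesses $S_c$ to harvest.

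The paper handles $V(H')\not\subseteq S$ by two devices you are missing. First, it runs the whole proof by induction on $n=|V(H')|$; since $S$ omits some $v'\in V(H')$, the induced subgraph on $S$ sits inside $H\oplus (H'\setminus\{v'\})$, which is $(k+1)$-$G$-free choosable by the inductive hypothesis (the numerical condition is monotone in $n$), so $S$ can be $L$-coloured. Second, it chooses $S$ \emph{maximal} among sets with $|S|>\delta|L(S)|$; maximality forces every $T\subseteq V''=V(H\oplus H')\setminus S$ to satisfy $|T|\leq \delta|L'(T)|$ for the residual lists $L'(v)=L(v)\setminus L(S)$, whence (by the argument inside Lemma~\ref{l1}) $V''$ is $L'$-colourable. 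Gluing the two colourings gives an $L$-$G$-free colouring of $H\oplus H'$, the desired contradiction. Without induction and maximality your counting cannot close in this sub-case.
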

\begin{proof}
The proof proceeds by induction on $|V(H')|$. Suppose that $L$ is a $(k + 1)$-list assignment of   $H\oplus H'$. For the induction basis, first assume that $n=1$, and $V(H')=\{v'\}$. We color $v'$ by one of the members of $L(v')$, say $c'$. For each member of $V(H)$ say $v$, assume that $L(v)\setminus \{c'\}=L'(v)$. As for any member of $ V(H)$ say $v$, $|L'(v)|\geq |L(v)|-1= k$, and $H$ is $k$-$G$-free choosable. Thus $H\oplus H'$ is  $(k+1)$-$G$-free choosable. So, assume that $n\geq 2$. Suppose that $V'=V(H')=\{v'_1,v'_2,\ldots,v'_n\}$. Now, by considering $\bigcap\limits_{v'\in V'} L(v') $, we have two cases as follow:\\

{\bf Case 1:} $\bigcap\limits_{v'\in V'} L(v')\neq \emptyset$.

Set $c'\in \bigcap\limits_{v'\in V'} L(v')$ and assign $c'$ to each member of $V(H')$. Therefore, as $G\nsubseteq H'$, one can check that $H\oplus H'[V_{c'}]$ is $G$-free. Now,  for each member of $V(H)$ say $v$, assume that $L(v)\setminus \{c'\}=L'(v)$. As for each member of $ V(H)$ say $v$, $|L'(v)|\geq |L(v)|-1= k$, and $H$ is $k$-$G$-free choosable, so $H\oplus H'$ is a $(k+1)$-$G$-free choosable.\\

{\bf Case 2:} $\bigcap\limits_{v'\in V'} L(v')= \emptyset$.

By contradiction, assume  that $H\oplus H'$ has no $L$-$G$-free coloring. By Lemma \ref{l1}, there must exist $S\subseteq V( H\oplus H')$ with $|S|>\delta |L(S)|$.  Suppose that $S$ be the maximal subset of $V(H\oplus H')$ so that $|S|>\delta |L(S)|$. 
 
 Suppose that $V'\subseteq S$. As $\bigcap\limits_{v'\in V'} L(v')= \emptyset$, any color in $L(V')$ appears in the lists of at most $n-1$ vertices of $V'$, therefore one can say that:
\begin{equation}\label{e1}
 |L(S)|\geq |L(V')|\geq \frac{n}{n-1}(k+1). 
\end{equation}
On the other hand, by the assumption, as $(n-1)(|V(H)|+n)\leq n\delta (k+1)$, it can be check that:
\begin{equation}\label{e2}	
	|S|\leq |V(H\oplus H')|\leq \delta\frac{n}{n-1}(k+1).
\end{equation}
Therefore, by Equations \ref{e1}, \ref{e2}, we have $|S|\leq \delta |L(S)|$, a contradiction. So, we may suppose that $V'\nsubseteq S$, that is  $|V'\setminus S|\neq 0$. 

Set $V''=V(H\oplus H')\setminus S$. Assume that $L(v'')\setminus L(S)=L'(v'')$ for each $v''\in V''$. By considering $H[S]$ and $H[V'']$, we have two claims as follow:
\begin{claim}\label{c1}
	$H[S]$ has a $L$-$G$-free coloring.
\end{claim} 
\begin{proof}[Proof of the Claim\ref{c1}]
As $(n-1)(|V(H)|+n)\leq n\delta (k+1)$, so $(n-2)(|V(H)|+n-1)\leq (n-1)\delta (k+1)$, because this operation is an increasing operation. Therefore, by the induction hypothesis,  $H\oplus H'\setminus\{v'\}$ is $(k+1)$-$G$-free list colorable, for any $v'\in V(H')$. Since $H[S] \subseteq  H\oplus H'\setminus\{v'\}$ it can be said  $H[S]$ has an  $L$-$G$-free coloring.
\end{proof}
\begin{claim}\label{c2}
	$H[V'']$ has an $L'$-$G$-free coloring.
\end{claim} 
\begin{proof}[Proof of the Claim\ref{c2}]
 By  contradiction, let 	$H[V'']$ is not $L'$-$G$-free colorable. Therefore, by Lemma \ref{l1}, there exists a set $S'$ of $V''$, so that $|S'|> \delta |L'(S')|$. In the other hand, $|L(S\cup S')|= |L(S)|+|L'(S')|< \frac{1}{\delta}|S+S'|$, therefore,  $\delta|L(S\cup S')|<|S+S'|$ which contradicts to the maximality of $S$. Hence, $|S'|\leq \delta |L'(S')|$ for each $S'\subseteq V''$, which means that 	$H[V'']$ has an $L'$-$G$-free coloring.
\end{proof}
 
Therefore, by Claim \ref{c1}, and Claim \ref{c2}, we can say that  $H\oplus H'$  has a $L$-$G$-free colorable, a contradiction. Hence:
	\[\chi_G^L(H\oplus H')\leq \chi_G^L(H)+1.\]
Which means that the proof is complete.
\end{proof}	
In the following, by using Lemma \ref{l2}, we prove the first main result, namely  Theorem \ref{mth1}.
%\begin{theorem}\label{t1}  
	%Suppose that $H$, $H'$ and $G$  be three graphs, where $\delta(G)=\delta$,  $H$ is a $G$-free $k$-colorable and $H'$ is a $G$-free $k'$-colorable. Suppose that $S$ and $S'$ be the maximum subset of $V(H)$ and $V(H')$ respectively, so that $H[S]$ is $G$-free and  $H'[S']$ is $G$-free. Hence, if either $(|S'|-1)(|V(H)|+|S'|)\leq |S'|\delta (k+1)$ or $(|S|-1)(|V(H')|+|S|)\leq |S|\delta (k'+1)$, then $H\oplus H'$ is a $G$-free $(k+k')$-colorable, that is $\chi_G^L(H\oplus H')\leq \chi_G^L(H)+\chi_G^L(H')$.
%\end{theorem}
\begin{proof}[\bf Proof of the Theorem \ref{mth1}]
	
Without loss of generality (W.l.g), suppose that:
 \[ (|S'|-1)(|V(H)|+|S'|)\leq |S'|\delta (k+1).\] 
 Where $S'$ is the maximum subset of $V(H')$  so that $G\nsubseteq H'[S']$. Therefore,  as $H'$ is  $k'$-$G$-free choosable and $\chi_G(H')\leq \chi_G^L(H')$, so $\chi_G(H')\leq k'$. For $i=1,2,\ldots, k'$, set $V'_i\subseteq V(H')$ where $V'_1$ is the maximum subset of $V(H')$  so that $G\nsubseteq H'[V_1]$ and for each $i\geq 2$, $V'_i$ be the  maximum subset of $V(H')\setminus (\cup_{j=1}^{j=i-1} V'_j)$, and $(H'\setminus (\cup_{j=1}^{j=i-1} V'_j))[V'_i]$ be $G$-free. It can be said that $|V'_1|= |S'|$, and $|V'_{i}|\leq |V'_{i-1}|$ for each $i\geq 2$. So, as $(|S'|-1)(|V(H)|+|S'|)\leq |S'|\delta (k+1)$, $S'$ is the maximum subset of $V(H')$, $G\nsubseteq H'[S']$ and $|V_1|= |S'|$, then by Lemma \ref{l2} we have the following:
\begin{equation}\label{e3}	
	\chi_G^L(H\oplus H'[V'_1])\leq \chi_G^L(H)+1. 
\end{equation}
For each $i\geq 1$, set $H_i=H_{i-1}\oplus H'[V'_i]$, where $H_0=H$. Therefore, it can be say that  the following claim is true:
\begin{claim}\label{c3}
  For each $i\geq 1$, $H\oplus H'[\cup_{j=1}^{j=i}V'_j]\subseteq H_i$.
\end{claim}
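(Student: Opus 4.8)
The plan is to establish Claim~\ref{c3} by induction on $i$, using only the definition of the join together with the elementary monotonicity fact that if $A\subseteq B$ are graphs and $C$ is a graph whose vertex set is disjoint from $V(B)$, then $A\oplus C\subseteq B\oplus C$ (indeed the vertices of $A$ sit among those of $B$, so every $A$-to-$C$ edge is a $B$-to-$C$ edge, and $E(A)\subseteq E(B)$). Before starting the induction I would record one structural remark: by construction the sets $V'_1,\ldots,V'_{k'}$ are pairwise disjoint, since each $V'_\ell$ is chosen inside $V(H')\setminus\bigcup_{j<\ell}V'_j$; consequently an easy induction shows $V(H_i)=V(H)\cup\bigcup_{j=1}^{i}V'_j=V\bigl(H\oplus H'[\bigcup_{j=1}^{i}V'_j]\bigr)$, so throughout we compare two graphs on the same vertex set and ``$\subseteq$'' is purely a statement about edge sets.

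For the base case $i=1$ we have $H_1=H_0\oplus H'[V'_1]=H\oplus H'[V'_1]=H\oplus H'[\bigcup_{j=1}^{1}V'_j]$, so the inclusion is in fact an equality. For the inductive step, assume $H\oplus H'[\bigcup_{j=1}^{i-1}V'_j]\subseteq H_{i-1}$. Joining both sides with $H'[V'_i]$ and invoking the monotonicity fact gives
\[
\Bigl(H\oplus H'\bigl[\bigcup_{j=1}^{i-1}V'_j\bigr]\Bigr)\oplus H'[V'_i]\;\subseteq\;H_{i-1}\oplus H'[V'_i]=H_i,
\]
so it suffices to prove the single inclusion
\[
H\oplus H'\bigl[\bigcup_{j=1}^{i}V'_j\bigr]\;\subseteq\;\Bigl(H\oplus H'\bigl[\bigcup_{j=1}^{i-1}V'_j\bigr]\Bigr)\oplus H'[V'_i].
\]

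This last inclusion I would verify by classifying the edges on the left-hand side. Any such edge is of one of the following types: an edge of $H$; an edge of $H'$ with both ends in $\bigcup_{j=1}^{i-1}V'_j$; an edge of $H'$ with both ends in $V'_i$; an edge of $H'$ with one end in $\bigcup_{j=1}^{i-1}V'_j$ and the other in $V'_i$; or an edge with one end in $V(H)$ and the other in $\bigcup_{j=1}^{i}V'_j$. The first two types already occur in $H\oplus H'[\bigcup_{j=1}^{i-1}V'_j]$, the third occurs in $H'[V'_i]$, the fourth is present because the outer join makes every vertex of $V(H)\cup\bigcup_{j=1}^{i-1}V'_j$ adjacent to every vertex of $V'_i$, and the fifth is covered partly by that same outer join and partly by the inner join already present in $H\oplus H'[\bigcup_{j=1}^{i-1}V'_j]$. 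Combining the two displayed inclusions closes the induction.

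I do not expect a genuine obstacle here: the content is bookkeeping about which extra edges the iterated join introduces between the pieces, and the only point needing care is checking that the $V'_j$ are pairwise disjoint so that all the joins in sight share the common vertex set $V(H)\cup\bigcup_{j=1}^{i}V'_j$; once that is in hand, the edge-type case analysis is routine.
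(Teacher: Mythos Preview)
Your argument is correct. The induction is sound, the monotonicity of the join under subgraph inclusion is exactly the tool needed, and your edge-type case analysis for the inclusion
\[
H\oplus H'\Bigl[\bigcup_{j=1}^{i}V'_j\Bigr]\subseteq \Bigl(H\oplus H'\Bigl[\bigcup_{j=1}^{i-1}V'_j\Bigr]\Bigr)\oplus H'[V'_i]
\]
covers all possibilities. The remark that the $V'_j$ are pairwise disjoint (so the vertex sets match) is the only subtlety, and you handle it.

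The paper's own proof takes a slightly different, non-inductive route: it simply observes that both graphs have the same vertex set $V(H)\cup\bigcup_{j\le i}V'_j$ and then identifies directly the edges of $H_i$ that are absent from $H\oplus H'[\bigcup_{j\le i}V'_j]$, namely the ``extra'' cross-edges $v_jv_{j'}$ with $v_j\in V'_j$, $v_{j'}\in V'_{j'}$, $j<j'$, introduced by the iterated join. That description is terser but leaves the reader to check the bookkeeping; your inductive version is longer but makes each step explicit. Neither approach has any real advantage over the other---the claim is pure edge-accounting either way.
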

\begin{proof}[Proof of Claim \ref{c3}]
	It is clear that $V(H\oplus H'[\cup_{j=1}^{j=i}V'_j])=V(H_i)$, also one can say that $E(H\oplus H'[\cup_{j=1}^{j=i}V'_j])=E(H_i)\setminus E'$, where $E'=\{v_{j}v_{j'}~: j\leq j', ~~v_j\in V'_j, ~v_{j'}\in V'_{j'}~~for~ each~~j\le j'\leq i\}$, which means that the  claim is true.
\end{proof}
Also, since $(|S'|-1)(|V(H)|+|S'|)\leq |S'|\delta (k+1)$, $|V'_1|=|S'|$, and  $|V'_{i}|\leq |V'_{i-1}|$ for each $i\geq 2$, it can be said that the following claim is true:
\begin{claim}\label{c4} For each $i\in \{1,2,\ldots,k'\}$, we  have:	
 \[ (|V_i'|-1)(|V(H)|+|V_i'|)\leq |V_i'|\delta (k+1).\] 
\end{claim}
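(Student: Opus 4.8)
The plan is to reduce the claimed inequality to a monotonicity statement about a single-variable function. Write $N=|V(H)|$ and, for an integer $n\geq 1$, set $\phi(n)=\frac{(n-1)(N+n)}{n}$. Then, after dividing by the positive integer $|V'_i|$, the inequality $(|V'_i|-1)(|V(H)|+|V'_i|)\leq |V'_i|\delta(k+1)$ is equivalent to $\phi(|V'_i|)\leq \delta(k+1)$, and likewise the standing hypothesis $(|S'|-1)(|V(H)|+|S'|)\leq |S'|\delta(k+1)$ is equivalent to $\phi(|S'|)\leq \delta(k+1)$. So it suffices to transfer the bound from $|S'|$ to each $|V'_i|$.

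Next I would observe that $\phi$ is nondecreasing on $\{1,2,3,\ldots\}$. Indeed, expanding gives $\phi(n)=n+(N-1)-\frac{N}{n}$, which is the sum of the nondecreasing term $n$, the constant $N-1$, and the nondecreasing term $-\frac{N}{n}$ (using $N=|V(H)|\geq 0$); hence $\phi(n)\leq \phi(m)$ whenever $1\leq n\leq m$. Equivalently one may note that the real function $x\mapsto x+(N-1)-\frac{N}{x}$ has derivative $1+\frac{N}{x^2}>0$. This is exactly the ``increasing operation'' already invoked informally in the proof of Claim \ref{c1}, now in a form applicable to all the pieces at once.

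Finally, recall from the construction preceding the claim that $|V'_1|=|S'|$ and $|V'_i|\leq |V'_{i-1}|$ for every $i\geq 2$, so $1\leq |V'_i|\leq |V'_1|=|S'|$ for each $i\in\{1,2,\ldots,k'\}$. Applying the monotonicity of $\phi$ together with the hypothesis yields $\phi(|V'_i|)\leq \phi(|S'|)\leq \delta(k+1)$, and multiplying back through by the positive integer $|V'_i|$ gives precisely $(|V'_i|-1)(|V(H)|+|V'_i|)\leq |V'_i|\delta(k+1)$, as claimed.

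There is essentially no substantial obstacle here; the one point deserving a word of care is the degenerate case $|V'_i|=1$ (or an empty part, should $\chi_G(H')<k'$ force some $V'_i$ to be empty), in which the left-hand side is $0$ and the inequality holds trivially. The monotonicity argument already covers $|V'_i|=1$, so no separate case analysis is needed, and the convention that each $V'_i$ is nonempty makes the division by $|V'_i|$ legitimate throughout.
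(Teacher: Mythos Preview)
Your proof is correct and is in fact cleaner than the paper's own argument. Both proofs exploit the same underlying monotonicity, but you package it differently.

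The paper fixes $t_j=|V'_1|-|V'_j|$, expands $(|V'_j|-1)(|V(H)|+|V'_j|)$ in terms of $(|V'_1|-1)(|V(H)|+|V'_1|)$ plus an error term, and then bounds that error term by appealing to two auxiliary inequalities, namely $|V(H)|\geq (k-1)\delta+1$ and $|V'_1|+|V'_j|\geq 2\delta$. Your approach instead normalizes by $|V'_i|$ and observes directly that $\phi(n)=n+(N-1)-N/n$ is nondecreasing for $n\geq 1$ whenever $N\geq 0$, so the hypothesis $\phi(|S'|)\leq\delta(k+1)$ immediately propagates to every smaller $|V'_i|$. This buys you a shorter proof that is also more robust: you never need the side conditions the paper invokes (whose justification in the paper is terse and, in the case of $|V'_j|\geq\delta$, not obviously valid for the possibly small trailing parts $V'_j$). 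Your handling of the degenerate case $|V'_i|=1$ is also appropriate.
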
 
\begin{proof}[Proof of Claim \ref{c4}]	
 For $i=1$, since $|V'_1|=|S'|$, and  $(|S'|-1)(|V(H)|+|S'|)\leq |S'|\delta (k+1)$, the proof is complete. So, suppose that $2\leq j\leq k'$, also for each $2\leq j\leq k'$ suppose that $|V'_1|-|V'_j|= t_j$.  Therefore, we need to show that  $(|V'_{j}|-1)(|V(H)|+|V'_{j}|)\leq |V'_{j}|\delta (k+1)$. As $|V'_1|-|V'_j|= t_j$, so:
 
\[ (|V'_{j}|-1)(|V(H)|+|V'_{j}|)=(|V'_1|-1-t_j)(|V(H)|+|V'_1|-t_j)\] 
	
\[=(|V'_1|-1)(|V(H)|+|V'_1|)-(t_j(|V'_1|-1)-t_j(|V(H)|+|V'_1|-t_j) \] 

\[\leq (|V'_1|)\delta(k+1)-(t_j(|V'_1|-1)-t_j(|V(H)|+|V'_1|-t_j)\]

\[= (|V'_j|)\delta(k+1)+t_j\delta(k+1)-(t_j(|V'_1|-1)-t_j(|V(H)|+|V'_1|-t_j)\]

\[= (|V'_j|)\delta(k+1)+t_j(\delta(k+1)-|V(H)|-2|V'_1|+1+t_j)\]  

\[= (|V'_j|)\delta(k+1)+t_j(k\delta+\delta-|V(H)|-|V'_1|-|V'_j|+1)\] 
  
So for each $j\in \{2,3,\ldots, k'\}$ we have the following equation:
\begin{equation}\label{e}
(|V'_{j}|-1)(|V(H)|+|V'_{j}|)\leq  (|V'_j|)\delta(k+1)+t_j(k\delta+\delta-|V(H)|-|V'_1|-|V'_j|+1)
\end{equation}
  
Now by Equation \ref{e}, it is sufficient  to show  that   for each $j\in \{2,3,\ldots, k'\}$ we have the followings:
\[ t_j(k\delta+\delta-|V(H)|-|V'_1|-|V'_j|+1)\leq 0.\]  
It is easy to say that $|V(H)|\geq (k-1)\delta+1$, also one can say that $|V'_1|+|V'_j|\geq 2\delta$, which means that:
  \[t_j(k\delta+\delta-|V(H)|-|V'_1|-|V'_j|+1)\leq 0.\] 
Hence, for each $i\in \{1,2,\ldots,k'\}$:
\begin{equation}\label{e4}
  (|V_i'|-1)(|V(H)|+|V_i'|)\leq |V_i'|\delta (k+1). 
\end{equation}
Therefore, Equation \ref{e4} shows that the proof of the claim is complete.
\end{proof}	
Hence, by Claim \ref{c4} and by Lemma \ref{l2}, for each $i\in \{1,,\ldots,k'\}$ we can say that:
\begin{equation}\label{e5}	
	\chi_G^L(H\oplus H'[V'_i])\leq \chi_G^L(H)+1\leq k+1. 
\end{equation}
So, regarding Equation \ref{e5}, and $\chi_G(H'[\cup_{j=1}^{j=i} V'_{j}])=i$, by assigning $i$ new separate and unique colors to each $V'_i$, it can concluded that $\chi_G^L(H\oplus H'[\cup_{j=1}^{j=i} V'_{j}])\leq k+i$, for each $i\in \{1,2,\ldots,k'\}$. Also,  since $H\oplus H'\cong H\oplus H'[\cup_{i=1}^{i=k'}V'_i]$, so:
\begin{equation}\label{e6}	
\chi_G^L(H\oplus H')= \chi_G^L(H\oplus H'[\cup_{i=1}^{i=k'}V'_i])\leq k+k'=\chi_G^L(H)+\chi_G^L(H'). 
\end{equation}
Equation \ref{e6}  means that the proof is complete.
\end{proof}	
%%%%%%%%%%%%%%%%%%%%%%%%%%%%%%%%%%%%%%%%%%%%%%%%%%%%%%%%%%%%%%%%%%%%%%%%%%%%%%%%%%%%%%%%%%%%%%%%%%%%%%%%%%%%%%%%%%%%%%%%%%%%%%%%%%%%%%%%%%%%%%%%%%%%%%%%%%%%%%%%%%
Before proving Theorem \ref{mth2}, we need two lemmas, which we state and prove in the following. In the following lemma, we determine an upper bond for $\chi_G^L(H)$, for each graph $H$ and $G$.
\begin{lemma}\label{l3}  
	Suppose that $H$ and $G$  be two graphs, where $\delta(G)=\delta$, $|V(H)|=n$, then:
	\[\chi_G^L(H)\leq \lceil \frac{n}{\delta}\rceil\]
\end{lemma}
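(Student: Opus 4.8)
The plan is to apply Lemma~\ref{l1} directly; there is essentially no other content. Set $k=\lceil \frac{n}{\delta}\rceil$. By the definition of $\chi_G^L(H)$ it suffices to show that $H$ is $k$-$G$-free choosable, i.e.\ that every list assignment $L$ with $|L(v)|\geq k$ for all $v\in V(H)$ admits an $L$-$G$-free coloring.

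First I would fix such an $L$ and argue by contradiction: assume $H$ is not $L$-$G$-free colorable. Then Lemma~\ref{l1} (with $\delta(G)=\delta$) produces a subset $S\subseteq V(H)$ with $|S|>\delta|L(S)|$. Since $\delta|L(S)|\geq 0$, this forces $S\neq\emptyset$, so fixing any $v\in S$ gives $|L(S)|\geq|L(v)|\geq k$. Hence
\[
\delta|L(S)|\;\geq\;\delta k\;=\;\delta\Big\lceil \tfrac{n}{\delta}\Big\rceil\;\geq\;n\;=\;|V(H)|\;\geq\;|S|,
\]
which contradicts $|S|>\delta|L(S)|$.

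Therefore no such $S$ exists, so $H$ admits an $L$-$G$-free coloring for every $k$-list assignment $L$; that is, $H$ is $k$-$G$-free choosable, and consequently $\chi_G^L(H)\leq k=\lceil \frac{n}{\delta}\rceil$. The only nontrivial ingredient is Lemma~\ref{l1} itself (the Hall's-theorem argument), which we are free to invoke; beyond that, the inequality $\delta\lceil n/\delta\rceil\geq n$ is immediate and there is no real obstacle. The single point worth a moment's care is to bound $|L(S)|$ from below by a single list $|L(v)|$ rather than trying to estimate the union, and to note that the degenerate case $S=\emptyset$ cannot occur.
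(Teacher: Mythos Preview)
Your argument is correct: invoking Lemma~\ref{l1} directly and bounding $|L(S)|\geq |L(v)|\geq k$ for any $v\in S$ gives the contradiction cleanly, and the inequality $\delta\lceil n/\delta\rceil\geq n$ is immediate.

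The paper, however, does not appeal to Lemma~\ref{l1} here. It proceeds by induction on $|V(H)|$: if some $\delta$ vertices have a common colour $c$ in their lists, assign $c$ to those $\delta$ vertices (the resulting class is $G$-free since $|V(G)|\geq\delta+1$), delete $c$ from all other lists, and apply the induction hypothesis to the remaining $n-\delta$ vertices with lists of size at least $\lceil (n-\delta)/\delta\rceil$. If no $\delta$ vertices share a list colour, then every colour lies in at most $\delta-1$ lists, so an arbitrary choice from each list yields colour classes of size at most $\delta-1$, all automatically $G$-free. Your route is shorter and reuses the Hall's-theorem machinery already established in Lemma~\ref{l1}; the paper's route is self-contained (no matching argument needed) and in fact constructs the colouring explicitly.
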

\begin{proof}
The proof proceeds by induction on $|V (H)|$. It is clear that $\chi_G^L(H)\leq \lceil \frac{n}{\delta}\rceil$  when $|V(H)|\in \{1,2,\ldots,\delta\}$. Hence,  assume that $|V(H)| \geq \delta+1$ and suppose that $L$ is  a $\lceil \frac{n}{\delta}\rceil$-list apportion of $H$. If there exist $v_1,v_2,\ldots, v_{\delta-1}$, and $v_{\delta}$ in $H$ so that $L(v_1) \cap L(v_2)\cap \ldots \cap L(v_{\delta})\neq \emptyset$, then  we catch a color $c \in L(v_1) \cap L(v_2)\cap \ldots \cap L(v_{\delta})$ and allocate $c$ to $v_1,v_2,\ldots, v_{\delta-1}$, and $v_{\delta}$  and set $L(v)\setminus \{c\}=L(v)$ for any $v \in V(H)\setminus \{v_1,v_2,\ldots, v_{\delta}\}$. Regarding $|L(v)|\geq|L(v)|-1\geq \lceil \frac{n-\delta}{\delta}\rceil$, and by the induction hypothesis,  $H\setminus \{v_1,v_2,\ldots, v_{\delta}\}$ has an $L$-$G$-free coloring. Therefore, $H$ is $L$-$G$-free colorable. Otherwise, if for any $\delta$ vertices $v_1,v_2,\ldots, v_{\delta-1}$, and $v_{\delta}$ of $H$, $L(v_1) \cap L(v_2)\cap \ldots \cap L(v_{\delta})= \emptyset$, then  it is easy to say that $H$ is $L$-$G$-free colorable, by considering $\lceil \frac{n}{\delta}\rceil$ class of size at most $\delta$.	
\end{proof}	
%%%%%%%%%%%%%%%%%%%%%%%%%%%%%%%%%%%%%%%%%%%%%%%%%%%%%%%%%%%%%%%%%%%%%%%%%%%%%%%%%%%%%%%%%%%%%%%%%%%%%%%%%%%%%%%%%%%%%%%%%%%%%%%%%%%%%%%%%%%%%%%%%%%%%%%%%%%%%%%%%%%
Suppose that $H$ and $G$  are two graphs, where $\delta(G)=\delta$, $|V(H)|=n$,  and  we may suppose that $g:V(H)\rightarrow \{1,2,\ldots, \chi_G(H)\}$ is a $\chi_G(H)$-coloring of $H$, so that for each $i\in [\chi_G(H)]$, $G\nsubseteq H[V_i]$  and $V_i=\{v\in V(H):~~g(v)=i\}$. For each $i$, suppose that $|V_i|=n_i$, and w.l.g assume that $n_1\geq n_2\geq\ldots \geq n_{\chi_G(H)}$. It is easy to say that $n_i\geq \delta$ for each $i\in [\chi_G(H)-1]$. Now by considering $n_i$, we have the following lemma. 
\begin{lemma}\label{l4}  If $(n_1-1)|V(H)|\leq n_1\delta \chi_G(H)$. Then:
	\[\chi_G^L(H)=\chi_G(H).\]
\end{lemma}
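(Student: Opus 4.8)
The plan is to deduce the lemma from Lemma~\ref{l2} by assembling $H$ from its colour classes one at a time, smallest first. Since a $k$-$G$-free colouring is exactly an $L$-$G$-free colouring for the constant list assignment $L\equiv\{1,\dots,k\}$, one always has $\chi_G(H)\le\chi_G^L(H)$, so it suffices to show that $H$ is $k$-$G$-free-choosable with $k=\chi_G(H)$. Fix the optimal colouring $V_1,\dots,V_k$ with $n_1\ge n_2\ge\dots\ge n_k$ and, as already observed, $n_i\ge\delta$ for $i\le k-1$; write $n=|V(H)|$ and $T_i=n_i+n_{i+1}+\dots+n_k$ for $1\le i\le k$.

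For $1\le t\le k$ put $G^{(t)}=H[V_{k-t+1}\cup\dots\cup V_k]$, so $G^{(1)}=H[V_k]$ is $G$-free and $G^{(k)}=H$. The crucial remark is that $G^{(t)}$ is a spanning subgraph of the join $G^{(t-1)}\oplus H[V_{k-t+1}]$ (it lacks only the edges between $V_{k-t+1}$ and $V_{k-t+2}\cup\dots\cup V_k$), and any subgraph of a $G$-free graph is $G$-free; hence a $G$-free $L$-colouring of that join restricts to a $G$-free $L$-colouring of $G^{(t)}$, which gives $\chi_G^L(G^{(t)})\le\chi_G^L\bigl(G^{(t-1)}\oplus H[V_{k-t+1}]\bigr)$. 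I would then prove $\chi_G^L(G^{(t)})\le t$ by induction on $t$: the base case $t=1$ is clear since $G^{(1)}$ is $G$-free, and for the inductive step I would observe that $G^{(t-1)}$ is $(t-1)$-$G$-free-choosable (larger lists only make colouring easier, so the bound $\chi_G^L(G^{(t-1)})\le t-1$ suffices), that $H[V_{k-t+1}]$ is $G$-free on $n_{k-t+1}$ vertices, and that $|V(G^{(t-1)})|+n_{k-t+1}=T_{k-t+1}$; then Lemma~\ref{l2}, with $G^{(t-1)}$ and $H[V_{k-t+1}]$ playing the roles of $H$ and $H'$ (so its parameter $k$ is $t-1$), yields $\chi_G^L\bigl(G^{(t-1)}\oplus H[V_{k-t+1}]\bigr)\le t$ \emph{provided} the inequality $(n_{k-t+1}-1)\,T_{k-t+1}\le n_{k-t+1}\,\delta\,t$ holds. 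Writing $i=k-t+1$ and taking $t=k$ at the end gives $\chi_G^L(H)=\chi_G^L(G^{(k)})\le k$, and hence equality.

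So the whole proof reduces to the inequality $(n_i-1)T_i\le n_i\delta(k-i+1)$ for every $i\in\{1,\dots,k-1\}$, and this elementary estimate is the only real obstacle. For $i=1$ it is exactly the hypothesis $(n_1-1)n\le n_1\delta k$. For $i\ge 2$ I would first record that $(n_i-1)n\le n_i\delta k$ always holds: if $n\le\delta k$ this is immediate from $n_i-1\le n_i$, while if $n>\delta k$ it follows because $t\mapsto(t-1)n-t\delta k$ is increasing and, by hypothesis, nonpositive at $t=n_1\ge n_i$. Then I would split into two cases. If $n_i\ge\delta+1$, use $T_i=n-\sum_{j<i}n_j\le n-(i-1)n_i$ together with the previous bound to get $(n_i-1)T_i\le n_i\delta k-(i-1)n_i(n_i-1)\le n_i\delta k-(i-1)n_i\delta=n_i\delta(k-i+1)$. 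If instead $n_i=\delta$, then sortedness and $n_j\ge\delta$ for $j\le k-1$ force $n_i=n_{i+1}=\dots=n_{k-1}=\delta$ and $n_k\le\delta$, whence $T_i\le(k-i+1)\delta$ and $(n_i-1)T_i=(\delta-1)T_i\le(\delta-1)(k-i+1)\delta\le\delta^2(k-i+1)=n_i\delta(k-i+1)$. The only delicate point throughout is keeping track of which colour classes are guaranteed to have size at least $\delta$ when bounding the tail sums $T_i$.
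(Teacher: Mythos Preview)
Your proof is correct and follows essentially the same strategy as the paper: both build $H$ from its colour classes by iterated applications of Lemma~\ref{l2}, checking at each stage the arithmetic inequality $(n_i-1)T_i\le n_i\delta(k-i+1)$. The only cosmetic differences are that the paper organises this as a top-down induction on $\chi_G(H)$ (removing $V_1$ first) and invokes Lemma~\ref{l3} for the boundary case $n_1=\delta$, whereas you unfold the induction bottom-up and handle the $n_i=\delta$ case by the direct estimate on~$T_i$.
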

\begin{proof}
Since $\chi_G(H)\leq \chi_G^L(H)$, it do to prove that $H$ is $\chi_G(H)$-$G$-free choosable(list colorable). The proof proceeds by induction on $\chi_G(H)$. If $H$ be $G$-free, then  $\chi_G^L(H)=\chi_G(H)=1$, so suppose that $H$ has at least one copy of $G$ as a subgraph, that is $2\leq \chi_G(H)$. Since $n_1\geq \delta$, then by lemma assumption one can say that:
\begin{equation}\label{e7}	
	 |V(H)|\leq \frac{n_1\delta}{n_1-1}\chi_G(H).
\end{equation}
If $n_1=\delta$, then as $n_1$ is maximal, so $\chi_G(H)=\lceil \frac{|V(H)|}{\delta}\rceil$, hence by Lemma \ref{l3}, $\chi_G^L(H)=\chi_G(H)$. Now, suppose that $n_1\geq \delta +1$. Hence:
\[|V(H)\setminus V_1|=|V(H)|-n_1\]
\[\leq \frac{n_1\delta}{n_1-1}\chi_G(H)-n_1\]
\[= \frac{n_1}{n_1-1}(\delta\chi_G(H)-n_1+1)\]
\[\leq \frac{n_1}{n_1-1}(\delta\chi_G(H)-\delta)\]  
\[\leq \frac{n_2\delta}{n_2-1}(\chi_G(H)-1)\] 
So, this inequality  and induction hypothesis  implies that $H\setminus V_1$  is $(\chi_G(H)-1)$-$G$-free choosable, because $V_2$ is the maximal color class of  $H\setminus V_1$. Now, regarding Equation \ref{e7} and the fact that  $H\setminus V_1$ is $(\chi_G(H)-1)$-$G$-free list colorable, Lemma \ref{l2} implies that $H$ is $\chi_G(H)$-$G$-free choosable, which means that the proof is complete.
\end{proof}  
%%%%%%%%%%%%%%%%%%%%%%%%%%%%%%%%%%%%%%%%%%%%%%%%%%%%%%%%%%%%%%%%%%%%%%%%%%%%%%%%%%%%%%%%%%%%%%%%%%%%%%%%%%%%%%%%%%%%%%%%%%%%%%%%%%%%%%%%%%%%%%%%%%%%%%%%%%%%%%%%%%%
In the following, we prove Theorem \ref{mth2} and  Theorem \ref{mth3} by using Lemma \ref{l4}.
\begin{theorem}\label{t3}{\bf (Theorem\ref{mth2})} Suppose that $H$ and $G$  are two graphs, where $\delta(G)=\delta$. If  we have, $|V(H)|\leq \delta \chi_G(H)+\sqrt{\delta \chi_G(H)}-(\delta-1)$, then:
	\[\chi_G^L(H)=\chi_G(H).\]
\end{theorem}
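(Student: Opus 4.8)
The plan is to deduce Theorem \ref{mth2} from Lemma \ref{l4}. Recall the setup preceding Lemma \ref{l4}: we fix a $\chi_G(H)$-$G$-free colouring with colour classes $V_1,\dots,V_{\chi_G(H)}$ of sizes $n_1\geq n_2\geq\cdots\geq n_{\chi_G(H)}$, and we know $n_i\geq\delta$ for $i\in[\chi_G(H)-1]$. Lemma \ref{l4} tells us that $\chi_G^L(H)=\chi_G(H)$ as soon as $(n_1-1)|V(H)|\leq n_1\delta\chi_G(H)$. So the entire task is to show that the hypothesis $|V(H)|\leq \delta\chi_G(H)+\sqrt{\delta\chi_G(H)}-(\delta-1)$ forces this inequality, for \emph{whatever} value $n_1$ actually takes. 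Write $N=|V(H)|$ and $X=\chi_G(H)$ for brevity in the sketch.

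First I would rewrite the target inequality $(n_1-1)N\leq n_1\delta X$ as $N\leq \delta X + \frac{N-\delta X}{n_1}\cdot\frac{n_1}{1}$... more cleanly: $(n_1-1)N\leq n_1\delta X \iff n_1(\delta X - N) \geq -N \iff n_1(N-\delta X)\leq N$. If $N\leq \delta X$ this is immediate (the left side is $\leq 0<N$), so assume $N>\delta X$; then we need $n_1\leq \frac{N}{N-\delta X}$. Thus the crux is an \emph{upper} bound on $n_1$: the largest colour class cannot be too big. The key combinatorial fact is that since all of $V_2,\dots,V_X$ have size $\geq\delta$ (they are non-empty $G$-free classes, and in fact the construction before Lemma \ref{l4} guarantees $n_i\geq\delta$), we get $n_1 = N - \sum_{i\geq 2}n_i \leq N-\delta(X-1)$. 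Substituting, it suffices to prove
\[
(N-\delta(X-1))(N-\delta X)\leq N,
\]
and I would show this follows from the hypothesis $N\leq \delta X+\sqrt{\delta X}-(\delta-1)$. Setting $t = N-\delta X \leq \sqrt{\delta X}-(\delta-1)$, the left side becomes $(t+\delta)t = t^2+\delta t$, while $N = \delta X + t$, so we must check $t^2+\delta t\leq \delta X + t$, i.e. $t^2+(\delta-1)t - \delta X\leq 0$. Since $t\leq \sqrt{\delta X}-(\delta-1)$, plug in: $t^2\leq \delta X - 2(\delta-1)\sqrt{\delta X}+(\delta-1)^2$ and $(\delta-1)t\leq (\delta-1)\sqrt{\delta X}-(\delta-1)^2$, so $t^2+(\delta-1)t \leq \delta X - (\delta-1)\sqrt{\delta X}\leq \delta X$, as required. (One should note $t\geq 0$ is not needed for the final quadratic bound since $f(t)=t^2+(\delta-1)t$ is increasing for $t\geq 0$ and the case $t\leq 0$ was already handled; a short remark covering $t<0$ suffices.)

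Having verified $(n_1-1)N\leq n_1\delta X$ holds under the hypothesis, Lemma \ref{l4} applies verbatim and gives $\chi_G^L(H)=\chi_G(H)$, completing the proof. The main obstacle I anticipate is purely bookkeeping: making sure the bound $n_1\leq N-\delta(X-1)$ is legitimate (it relies on $n_i\geq\delta$ for $i=2,\dots,X$, which is exactly the remark stated just before Lemma \ref{l4}, valid because each such class is a maximal $G$-free class and $G$ has minimum degree $\delta$, so any $G$-free induced subgraph that is itself a removed ``layer'' must — wait, more carefully — one uses that in a \emph{greedy/maximal} $G$-free decomposition each non-final class has at least $\delta$ vertices; this is the content of the sentence ``It is easy to say that $n_i\geq\delta$''). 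The second minor point is handling the degenerate edge cases $N\leq\delta X$ and $X=1$ (where $H$ is $G$-free and both sides equal $1$) separately before running the quadratic estimate, and confirming that the substitution $t=N-\delta X$ together with the hypothesis indeed collapses to the one-line inequality $t^2+(\delta-1)t\leq\delta X$. No deep idea is needed beyond Lemma \ref{l4}; everything reduces to this algebraic manipulation.
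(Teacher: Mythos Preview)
Your approach is essentially the paper's: reduce to Lemma~\ref{l4} by bounding $n_1$ from above using the sizes of the other colour classes, then verify the inequality $(n_1-1)|V(H)|\leq n_1\delta\chi_G(H)$ by an elementary quadratic estimate in $t=N-\delta X$. The algebraic packaging differs (you isolate $n_1\leq N/(N-\delta X)$, the paper instead bounds $\frac{n_1}{n_1-1}\delta X$ from below), but the substance is identical.

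There is one bookkeeping slip. You assert that $n_i\geq\delta$ for all $i=2,\dots,X$, citing the remark before Lemma~\ref{l4}; but that remark only gives $n_i\geq\delta$ for $i\in[\chi_G(H)-1]$, with merely $n_{\chi_G(H)}\geq 1$ in general (e.g.\ take $G=K_{\delta+1}$ and $H=K_{\delta+1}$: then $X=2$ and one class can be a single vertex). Consequently the correct upper bound is
\[
n_1 \;\leq\; N - \delta(X-2) - 1 \;=\; t + 2\delta - 1,
\]
not $n_1\leq t+\delta$. This is exactly the bound the paper uses (its Equation~\eqref{e88}). Fortunately your algebra survives unchanged: for $t>0$ one now needs $t^2+(2\delta-2)t\leq\delta X$, and plugging in $t\leq\sqrt{\delta X}-(\delta-1)$ gives $t^2+(2\delta-2)t\leq \delta X-(\delta-1)^2\leq\delta X$. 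So the gap is cosmetic, but you should correct the claimed lower bound on $n_X$ and adjust the coefficient accordingly.
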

\begin{proof}
Assume that $g:V(H)\rightarrow \{1,2,\ldots, \chi_G(H)\}$ is the $\chi_G(H)$-coloring of $H$, so that for each $i\in [\chi_G(H)]$, $G\nsubseteq H[V_i]$ and $V_i=\{v\in V(H):~~g(v)=i\}$. For each $i$, suppose that $|V_i|=n_i$, and w.l.g assume that $n_1\geq n_2\geq\ldots \geq n_{\chi_G(H)}$. It is easy to say that $n_i\geq \delta$ for each $i\in [\chi_G(H)-1]$, and $n_{\chi_G(H)}\geq 1$. It can be checked that:
\begin{equation}\label{e88}
  n_1\leq |V(H)|-(\delta\chi_G(H)-(2\delta-1)).
\end{equation}
  Otherwise one can check that $\sum_{i=1}^{i=\chi_G(H)} n_i\geq |V(H)|+1$, a contradiction.  Therefore, by assumption lemma  and by Equation \ref{e88} we have the next equation:
\begin{equation}\label{e8}	
 n_1\leq \delta \chi_G(H)+\sqrt{\delta \chi_G(H)}-(\delta-1)-(\delta\chi_G(H)-(2\delta-1))=\sqrt{\delta \chi_G(H)}+\delta.
\end{equation}
Hence, by Equation \ref{e8} we have the following:
\[\frac{n_1}{n_1-1}\delta \chi_G(H)\geq \frac{\sqrt{\delta \chi_G(H)}+\delta}{\sqrt{\delta \chi_G(H)}+(\delta-1)}\times \delta \chi_G(H)\]
\[=\delta \chi_G(H)+\frac{\delta \chi_G(H)}{ \sqrt{\delta \chi_G(H)}+(\delta-1)}\]
\[>\delta \chi_G(H)+\frac{\delta \chi_G(H)-(\delta-1)}{ \sqrt{\delta \chi_G(H)}+(\delta-1)}\]
\[=\delta \chi_G(H)+ \sqrt{\delta \chi_G(H)}-(\delta-1)\geq |V(H)|\]
So, by Lemma \ref{l4}, $\chi_G^L(H)=\chi_G(H)$, which means that the proof is complete.
\end{proof}  
Now by Theorem \ref{t3}, one can say that the following results are valid. For  this, first we need the following definition.
\begin{definition}{\bf$\G$-free graph:}
	Suppose that  $\G$ is a collection of  some graphs, a given graph $H$ is called  $\G$-free, if $H$ contains no copy of $G$ as a subgraph for each $G\in \G$. When $\G=\{G\}$, then $H$ is $G$-free if $G\nsubseteq H$. In this attention, we say a graph $H$ has a $k$-$\G$-free coloring if there exists a map $g : V(H) \longrightarrow \{1,\ldots,k\}$ so that each of the  color classes of $g$ is $\G$-free i.e. contain no copy of each member of $\G$ as a subgraph.  For given  graphs $H$ and $\G$, the $\chi_{\G}(H)$ of $H$ is the minimum integer $k$ so that $H$ is $k$-$\G$-free coloring.
\end{definition}
\begin{corollary}\label{cl1}Suppose that $\G$ be a collection of all $d$-regular graphs, where for each $G\in \G$ we have:
\[|V(H)|\leq d\chi_G(H)+\sqrt{d \chi_G(H)}-(d-1).\] 
Then:
\[\chi_{\G}^L(H)=\chi_{\G}(H).\]  	 
\end{corollary}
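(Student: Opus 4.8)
The plan is to notice that throughout Lemma \ref{l1}--Lemma \ref{l4} and the proof of Theorem \ref{t3} the single graph $G$ is used only through the number $\delta=\delta(G)$ and the one elementary consequence that \emph{no graph on at most $\delta(G)$ vertices contains a copy of $G$}; nothing else about $G$ matters. When $\G$ is a collection of $d$-regular graphs, every $G\in\G$ has minimum degree $d$ and hence at least $d+1$ vertices, so no graph on at most $d$ vertices contains a copy of any member of $\G$, i.e.\ every such graph is $\G$-free. So the first step is to replay the whole chain with $\G$ in place of $G$ and $d$ in place of $\delta$, aiming at the following $\G$-analogue of Theorem \ref{t3}: if $|V(H)|\leq d\,\chi_{\G}(H)+\sqrt{d\,\chi_{\G}(H)}-(d-1)$, then $\chi_{\G}^L(H)=\chi_{\G}(H)$.

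Carrying this out, I would first record the $\G$-version of Lemma \ref{l1} (build the same bipartite graph with $d$ copies of the colour set, apply Hall's theorem, obtain a colouring using each colour at most $d$ times, so every colour class has at most $d$ vertices and is therefore $\G$-free). Then Lemma \ref{l2} goes through once ``$H'$ is $G$-free'' is replaced by ``$H'$ is $\G$-free'': in Case $1$ a single colour is placed on all of $V(H')$ and the induced colour class is exactly $H'$, which is $\G$-free, while Case $2$ only invokes the $\G$-form of Lemma \ref{l1} together with the maximality of $S$ and is unchanged. Lemmas \ref{l3} and \ref{l4} and the proof of Theorem \ref{t3} then carry over verbatim with $\delta\rightarrow d$ and ``$G$-free''$\rightarrow$``$\G$-free'' everywhere; the one thing worth checking is the implicit use inside Lemma \ref{l4} that $k$-$\G$-free-choosability passes to subgraphs (when one goes from $(H\setminus V_1)\oplus H[V_1]$ to $H$), which holds because a copy of a member of $\G$ inside a subgraph is a copy inside the ambient graph.

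Finally I would feed the corollary's hypothesis into this $\G$-analogue. A $\G$-free colouring is in particular a $G$-free colouring for every $G\in\G$, so $\chi_G(H)\leq\chi_{\G}(H)$ for each $G\in\G$, and the function $t\mapsto d t+\sqrt{d t}-(d-1)$ is nondecreasing for $t\geq 1$; hence the assumed inequality $|V(H)|\leq d\,\chi_G(H)+\sqrt{d\,\chi_G(H)}-(d-1)$ (which holds for the members of $\G$, and the collection is nonempty) forces $|V(H)|\leq d\,\chi_{\G}(H)+\sqrt{d\,\chi_{\G}(H)}-(d-1)$, and the $\G$-analogue of Theorem \ref{t3} yields $\chi_{\G}^L(H)=\chi_{\G}(H)$. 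The only real work is the bookkeeping of the second paragraph, and I do not expect a genuine obstacle there: in every one of these arguments $\chi_{\G}$ with parameter $d$ plays exactly the role that $\chi_G$ played with parameter $\delta(G)$, and the single extra ingredient (subgraph-monotonicity of $\G$-free-choosability) is immediate.
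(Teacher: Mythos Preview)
Your approach is correct and takes a genuinely different route from the paper's. The paper does \emph{not} replay Lemmas~\ref{l1}--\ref{l4} and Theorem~\ref{t3} for the family $\G$; instead it applies Theorem~\ref{t3} separately to individual members $G\in\G$ (each has $\delta(G)=d$, so the hypothesis gives $\chi_G^L(H)=\chi_G(H)$ for every $G\in\G$) and then tries to squeeze $\chi^L_{\G}(H)$ down to $\chi_{\G}(H)$ via comparison inequalities. The key step in the paper is the choice of some $G''\in\G$ with $\chi^L_{\G}(H)\le\chi^L_{G''}(H)$, after which it chains $\chi^L_{\G}(H)\le\chi^L_{G''}(H)=\chi_{G''}(H)\le\chi_{\G}(H)$.

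Your route is longer but more self-contained: you observe that throughout Lemmas~\ref{l1}--\ref{l4} and Theorem~\ref{t3} the only facts used about $G$ are the number $\delta(G)$ and the consequence that every set of at most $\delta(G)$ vertices is $G$-free; for the family $\G$ of $d$-regular graphs the same holds with parameter $d$, so the whole chain yields a direct $\G$-analogue of Theorem~\ref{t3}. Your final monotonicity step (from $\chi_G(H)\le\chi_{\G}(H)$ and the fact that $t\mapsto dt+\sqrt{dt}-(d-1)$ is nondecreasing) then feeds the hypothesis into that analogue. What you gain is that you never invoke the identity $\chi^L_{\G}(H)=\max_{G\in\G}\chi^L_G(H)$, which the paper uses without justification (the obvious inclusion only gives $\chi^L_G(H)\le\chi^L_{\G}(H)$, and it is not clear why equality should be attained for some $G$); the cost is the extra bookkeeping of rechecking the earlier lemmas, which, as you say, is routine.
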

\begin{proof}
	Consider $G\in \G$, so that for which  $\chi_{G'}(H)\leq \chi_{G}(H)$ for each $G'\in \G$. Therefore, one can say that $\chi_{\G}(H)\leq \chi_{G}(H)$, also by assumption and by Theorem \ref{t3} we have  $\chi_{G}(H)=\chi^L_{G}(H)$, that is $\chi_{\G}(H)\leq \chi^L_{G}(H)$. So as $\chi^L_{G}(H)\leq \chi^L_{\G}(H)$ we have $\chi_{\G}(H)\leq \chi^L_{\G}(H)$. Now we have to show that $\chi^L_{\G}(H)\leq \chi_{\G}(H)$. 	Consider $G''\in \G$, so that for which  $\chi^L_{\G}(H)\leq \chi^L_{G''}(H)$. Therefore,  by assumption and by Theorem \ref{t3}, we have  $\chi_{G''}(H)=\chi^L_{G''}(H)$, and as 
 $\chi_{G''}(H)\leq \chi_{\G}(H)$, it can be said that $\chi^L_{\G}(H)\leq \chi_{\G}(H)$, which means that the proof is complete.
\end{proof}
	
In corollary  \ref{cl1}, if we take   $d=2$ hence $\G=\{ C_n, n \geq 3\}$ and  for any
arbitrary graph for $H$, then it is easy to say that  we get Theorem \ref{m2}. Also, for $d=1$, we have $\G=\{K_2\}$. Hence, as  $|V(H)|\leq  \chi(H)+\sqrt{ \chi(H)}\leq  \chi(H)+\sqrt{2 \chi(H)}$, therefore by corollary  \ref{cl1}, if we take   $d=1$, then  we get Theorem \ref{thm1}.
\begin{theorem}{\bf(Theorem \ref{mth3})}
For each two connected graphs  $H$ and $G$, we have:  	  
	\[\chi^L_{_G}(H)\leq\lceil \frac{\Delta(H)}{\delta(G)}\rceil+1.\]
\end{theorem}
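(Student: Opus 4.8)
The plan is to reduce the statement to a defective (improper) list-colouring problem and then resolve that by a local recolouring argument. Throughout, write $\delta=\delta(G)$ and $\Delta=\Delta(H)$; since $G$ is connected we may assume $\delta\ge 1$ (if $G=K_1$ the quantity $\lceil\Delta/\delta\rceil$ is not even defined and the statement is vacuous), and then $G$ has at least $\delta+1$ vertices. Set $k=\lceil\Delta/\delta\rceil+1$. The only arithmetic I will need is $k>\Delta/\delta$, which gives $\lfloor\Delta/k\rfloor\le\delta-1$.

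The first step is the structural observation linking $\delta(G)$ to $G$-freeness: if $c$ is any colouring of $V(H)$ with $c(v)\in L(v)$ for every $v$ in which \emph{every vertex has fewer than $\delta$ neighbours of its own colour}, then $c$ is an $L$-$G$-free colouring. Indeed, were some colour class $V_i$ to contain a copy of $G$, then under an embedding of $G$ into $H[V_i]$ each vertex $x$ of $G$ would map to a vertex having at least $\deg_G(x)\ge\delta$ distinct neighbours inside $V_i$, contradicting the degree bound. Hence it suffices to show: for an arbitrary list assignment $L$ with $|L(v)|\ge k$ for all $v$, there is a colouring picking each $c(v)\in L(v)$ in which every vertex has at most $\delta-1$ neighbours of its own colour.

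To produce such a colouring I would argue by extremality. Among the finitely many colourings $c$ with $c(v)\in L(v)$, choose one minimising the number $M(c)$ of monochromatic edges (edges $uv\in E(H)$ with $c(u)=c(v)$). Suppose for contradiction that some vertex $v$ has at least $\delta$ neighbours of colour $c(v)$. Since $\deg_H(v)\le\Delta$ and $|L(v)|\ge k$, pigeonhole yields a colour $\alpha\in L(v)$ that occurs on at most $\lfloor\Delta/k\rfloor\le\delta-1$ neighbours of $v$; in particular $\alpha\ne c(v)$. Recolouring $v$ by $\alpha$ and leaving all other vertices fixed affects only the edges incident with $v$, and the number of monochromatic such edges drops from at least $\delta$ to at most $\delta-1$; thus $M$ strictly decreases, contradicting minimality. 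So the minimiser has every vertex with at most $\delta-1$ same-coloured neighbours, hence by the first step it is an $L$-$G$-free colouring, and since $L$ was arbitrary, $H$ is $k$-$G$-free-choosable, i.e.\ $\chi^L_G(H)\le\lceil\Delta(H)/\delta(G)\rceil+1$.

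I expect the only delicate points to be the first step — writing out carefully that a subgraph copy of $G$ forces $\delta$ neighbours inside the colour class at every vertex of the copy — together with verifying the numerical inequality $\lfloor\Delta/k\rfloor\le\delta-1$ that drives the pigeonhole choice of $\alpha$. Connectedness of $H$ is not really used (at most it rules out the trivial $\Delta=0$ case, which the recolouring argument handles anyway), and connectedness of $G$ serves only to guarantee $\delta\ge 1$. One might hope to apply Lemma \ref{l1} instead, but the set it returns is bounded only from below in size and so cannot by itself contradict a $\Delta(H)$-dependent bound; the extremal recolouring seems to be the right tool.
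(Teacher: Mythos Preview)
Your proof is correct. The paper reaches the same bound by a different mechanism: instead of minimising monochromatic edges, it fixes an ordering $v_1,\dots,v_n$ of $V(H)$ and colours greedily, assigning to each $v_{i+1}$ a colour from $L(v_{i+1})$ that appears on fewer than $\delta$ of its already-coloured neighbours (at most $\lfloor\Delta/\delta\rfloor\le k-1$ colours can be ``blocked'', so one remains). The invariant maintained is $G$-freeness of every colour class, and the justification is exactly your structural observation applied at the newly coloured vertex: any copy of $G$ created by adding $v_{i+1}$ would force $v_{i+1}$ to have $\ge\delta$ neighbours inside its class. Both arguments thus rest on the same reduction to $(\delta-1)$-defective list colouring; the difference is that your extremal recolouring yields the stronger global conclusion that \emph{every} vertex has fewer than $\delta$ same-coloured neighbours, while the paper's greedy sweep only controls each vertex's degree into the previously coloured part of its class (which is all that is needed). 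Your remark that connectedness of $H$ plays no role, and that connectedness of $G$ is used only to secure $\delta\ge 1$, applies equally to the paper's argument.
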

\begin{proof} 
Suppose that $H$ and $G$ are two connected graphs that satisfy the theorem conditions. Assume that $L$ is a $k$-list assignment of $H$, where
$k=\lceil \frac{\Delta(H)}{\delta(G)}\rceil+1$. Suppose that $H$ is not $G$-free. Otherwise, it is easy to say that  $\chi^L_{_G}(H)=1\leq \lceil \frac{\Delta(H)}{\delta(G)}\rceil+1$. Hence  assume that $G\subseteq H$, that is $2\leq \chi_{_G}(H)$. Suppose that $v_1, v_2,\ldots, v_n$ be an ordering of $V(H)$ so that for each $i$, vertex $v_i$, have at most $\Delta(H)$ low-indexed neighbors in $H$. Now we color the vertices of $H$ from their lists by this order. Assume that $v_1, v_2,\ldots, v_i$ has been colored and every subset of $v_1, v_2,\ldots, v_i$ which colored with the same color be  $G$-free. Since $v_{i+1}$ has at most $\Delta(H)+1-\delta(G)$ neighbors in $v_1, v_2,\ldots, v_i$, there are at most  $\lceil \frac{\Delta(H)}{\delta(G)}\rceil$ of this $G$ containing at least $\delta$ neighbors of $v_{i+1}$. Therefore, it can be said that for $v_{i+1}$, there exists at least one color available in $L(v_{i+1})$ which appears at most once among the  low-indexed  neighbor of $v_{i+1}$. We assign such  color to  $v_{i+1}$. Therefore, we gain a $G$-free coloring of $H[{v_1, v_2, \ldots, v_{i+1}}]$. By continuing this method until $i+1=n$, we obtain a $G$-free $L$-coloring of $H$. 
\end{proof}  
%%%%%%%%%%%%%%%%%%%%%%%%%%%%%%%%%%%%%%%%%%%%%%%%%%%%%%%%%%%%%%%%%%%%%%%%%%%%%%%%%%%%%%%%%%%%%%%%%%%%%%%%%%%%%%%%%%%%%%%%%%%%%%%%%%%%%%%%%%%%%%%%%%%%%%%%%%%%%
\subsection{Proof of Theorem \ref{mth4}} 
 Before proving  Theorem \ref{mth4}, we need to some results. Suppose that   $\R$ is a collection of all $d$-regular graphs and let $H$ be an  arbitrary graph.  Hence, we have the following lemma:
\begin{lemma}\label{le5}
 Suppose that $n$ be a positive integer, and $H$ be an arbitrary graph. So:
 \[\chi_{\R}(H\oplus K_n)\geq \frac{\chi_{\R}(H)+n}{d}.\]
\end{lemma}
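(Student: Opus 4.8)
The plan is to take an \emph{optimal} $\R$-free colouring of $H\oplus K_n$ with $k=\chi_{\R}(H\oplus K_n)$ colours and to read off from it both an $\R$-free colouring of $H$ and a bound on how many vertices of $K_n$ can sit in one colour class. Two elementary facts drive everything. First, any subgraph of a graph with no $d$-regular subgraph again has no $d$-regular subgraph; consequently the restriction of an $\R$-free colouring to an induced subgraph is still $\R$-free. Second, $K_{d+1}$ is $d$-regular and therefore lies in $\R$, so any graph containing $K_{d+1}$ fails to be $\R$-free. Since every vertex of $K_n$ is adjacent in $H\oplus K_n$ to every vertex of $H$, cliques appear very readily, and this is what I will exploit.

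First I would split the colour classes $V_1,\dots,V_k$ into three types and let $A_0,A_1,B$ be the corresponding index sets: $A_0$ for the classes with $V_i\subseteq V(K_n)$, $A_1$ for the classes meeting both $V(K_n)$ and $V(H)$, and $B$ for the classes with $V_i\subseteq V(H)$. A class in $A_0$ induces a clique $K_{|V_i|}$ in $H\oplus K_n$, so by the second fact $|V_i|\le d$. A class in $A_1$ together with any one of its $H$-vertices induces a clique $K_{1+|V_i\cap V(K_n)|}$, so $|V_i\cap V(K_n)|\le d-1$. Adding up the contributions of all classes to the $n$ vertices of $K_n$ (classes in $B$ contribute nothing) yields
\[
n\ \le\ d|A_0|+(d-1)|A_1|.
\]

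Next I would restrict the colouring to $V(H)$: exactly the colours in $A_1\cup B$ occur there, and by the first fact each $H[V_i\cap V(H)]$ is $\R$-free because $(H\oplus K_n)[V_i]$ is. Hence $\chi_{\R}(H)\le |A_1|+|B|$. Using $k=|A_0|+|A_1|+|B|$ and the count above,
\[
dk-n\ \ge\ d(|A_0|+|A_1|+|B|)-d|A_0|-(d-1)|A_1|\ =\ |A_1|+d|B|\ \ge\ |A_1|+|B|\ \ge\ \chi_{\R}(H),
\]
which rearranges to $\chi_{\R}(H\oplus K_n)=k\ge(\chi_{\R}(H)+n)/d$, as claimed.

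I do not expect a genuine obstacle; the only points requiring a line of care are the two structural facts about $d$-regular subgraphs (monotonicity under taking subgraphs, and $K_{d+1}\in\R$) and the bookkeeping observation that each class in $B$ still costs a full colour while contributing nothing to the count of $K_n$-vertices, which is precisely what makes the slack $d|B|\ge|B|$ (valid since $d\ge 1$) sufficient to close the inequality. I would also note that the bound is not tight in general but is exact in degenerate ranges such as $d=1$, where $\chi_{\R}=\chi$ and both sides equal $\chi(H)+n$; this suffices for the use made of it in the proof of Theorem~\ref{mth4}.
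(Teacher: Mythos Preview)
Your proof is correct and follows essentially the same approach as the paper: both use that $K_{d+1}\in\R$ to cap the number of $K_n$-vertices in each colour class, restrict the colouring to $H$ to bound $\chi_{\R}(H)$ by the number of classes not saturated by $K_n$-vertices, and then combine these via a straightforward count. The only cosmetic difference is the bookkeeping---the paper stratifies the classes by the exact number $j$ of $K_n$-vertices they contain (setting $t_j=|\{i:|V_i\cap V(K_n)|=j\}|$), whereas you partition them into the three types $A_0,A_1,B$; both lead to the same inequality $dk\ge \chi_{\R}(H)+n$.
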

\begin{proof} 
 Suppose that $t=\chi_{\R}(H\oplus K_n)$ and let $g$ be a $t$-$\G$-free coloring of $H\oplus K_n$. For each $i\in \{1,2,\ldots, \chi_{\R}(H)\}$, let:
\[V_i=\{v\in V(H\oplus K_n)~:~g(v)=i\}.\]
Since $H[V_i]$ is a $\R$-free, and $\R$ is a collection of all $d$-regular graphs, so $K_{d+1}\nsubseteq H[V_i]$ for each $i$. Therefore, each $V_i$ contains at most $d$ vertices of $V(K_n)$. Now, if $t_j=|\{i~:~ 1\leq i\leq t~~and~~ |V_i\cap V(K_n)|=j\}|$, and $k_j\neq 0$, then $j\leq d$.  Therefore, $t=t_0+t_1+\ldots+t_d$, and it is easy to say that $n=\sum_{m=1}^{m=d}mt_m$, also one can say that $\chi_{\R}(H)\leq  \sum_{m=0}^{m=d-1}t_m$. Hence,
 \[ dt=d(\sum_{m=0}^{m=d}t_m)=dt_0+\sum_{m=1}^{m=d-1}(d-m)t_m+\sum_{m=1}^{m=d}mt_m\]
 \[\geq (d-1)t_0+\sum_{m=0}^{m=d-1}t_m+\sum_{m=1}^{m=d}mt_m \]
 \[\geq \chi_{\R}(H)+n\]
 Therefore, $t=\chi_{\R}(H\oplus K_n)\geq \frac{\chi_{\R}(H)+n}{d}$, which means that  proof is complete.
\end{proof} 
\begin{theorem}\label{t0}
 Assume that  $\R$ is a collection of all $d$-regular graphs where $d
 \geq 1$. For each  arbitrary graph $H$, there exists a non-negative integer $n'$, so that $\chi_{\R}(H\oplus K_n)=\chi^L_{\R}(H\oplus K_n)$, for any integer $n$ with $n\geq n'$.	 
\end{theorem}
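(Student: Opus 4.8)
The plan is to pin down both $\chi_\R(H\oplus K_n)$ and $\chi^L_\R(H\oplus K_n)$ for large $n$ and check that they coincide; since $\chi_\R(F)\le\chi^L_\R(F)$ for every graph $F$, it suffices to prove $\chi^L_\R(H\oplus K_n)\le\chi_\R(H\oplus K_n)$ for all $n\ge n'$. First I would record three trivial facts: a set of vertices of $H\oplus K_n$ is $\R$-free iff it spans no $d$-regular subgraph; every $d$-regular graph has at least $d+1$ vertices; and $\delta(G)=d$ for every $G\in\R$. Consequently the proofs of Lemmas~\ref{l1}, \ref{l2} and \ref{l3} go through verbatim for the collection $\R$ with the parameter $\delta$ replaced by $d$, since the only property of $G$ they use is that a colour class of size at most $d$ spans no member of $\R$. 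In particular, by Lemma~\ref{l3} together with the obvious lower bound, $\chi^L_\R(K_m)=\chi_\R(K_m)=\lceil m/d\rceil$ for every $m$.

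Next I would isolate the relevant parameter. Call a partition $\pi=\{W_1,\dots,W_p\}$ of $V(H)$ \emph{$\R$-free} if each $H[W_i]$ is $\R$-free (singletons give one such $\pi$), and for an $\R$-free $W\subseteq V(H)$ let $a(W)$ be the largest $m\ge0$ for which $K_m\oplus H[W]$ is still $\R$-free; since $d$ clique-vertices together with one more vertex span $K_{d+1}$, one has $0\le a(W)\le d-1$. Put
\[
B(n)=\min_{\pi}\Big(|\pi|+\big\lceil\tfrac1d\big(n-\textstyle\sum_{W\in\pi}a(W)\big)\big\rceil\Big),
\]
the minimum over all $\R$-free partitions $\pi$ of $V(H)$. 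I would then prove $\chi_\R(H\oplus K_n)\ge B(n)$: in any $\R$-free colouring of $H\oplus K_n$, split the colour classes into those meeting $V(H)$ (``mixed'') and those lying inside $V(K_n)$ (``pure''); the $V(H)$-parts of the mixed classes form an $\R$-free partition of $V(H)$, a mixed class with $H$-part $W_i$ spans $K_{m_i}\oplus H[W_i]$ and so contains at most $a(W_i)$ vertices of $K_n$, and each pure class spans a clique and so has at most $d$ vertices; counting the $n$ vertices of $K_n$ yields the bound.

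The core step is the matching bound $\chi^L_\R(H\oplus K_n)\le B(n)$ for $n$ large. Fix a partition $\pi=\{W_1,\dots,W_p\}$ attaining $B(n)$, set $Z_j:=K_{a(W_j)}\oplus H[W_j]$ (which is $\R$-free by the choice of $a$), and split $V(K_n)=U_0\sqcup U_1\sqcup\cdots\sqcup U_p$ with $|U_j|=a(W_j)$ for $j\ge1$ and $|U_0|=n-\sum_j a(W_j)$. Identifying $Z_j$ with the induced block on $W_j\cup U_j$ and $F_0$ with the clique on $U_0$, one checks that $H\oplus K_n$ is a spanning subgraph of the iterated join $F_0\oplus Z_1\oplus\cdots\oplus Z_p$ — the latter has exactly the same blocks and merely adds, between distinct $W_i$'s, the non-edges of $H$ — so, as $\R$-freeness of a colouring passes to spanning subgraphs, it suffices to bound $\chi^L_\R(F_0\oplus Z_1\oplus\cdots\oplus Z_p)$. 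Here $F_0$ is $\lceil|U_0|/d\rceil$-$\R$-free-choosable by Lemma~\ref{l3}; adding the blocks $Z_1,\dots,Z_p$ one at a time and applying the $\R$-version of Lemma~\ref{l2} at each step (legitimate since every $Z_j$ is $\R$-free) shows the partial join through $Z_j$ is $(\lceil|U_0|/d\rceil+j)$-$\R$-free-choosable \emph{provided} the numerical hypothesis of that lemma holds, namely
\[
(|V(Z_j)|-1)\big(|V(F_{j-1})|+|V(Z_j)|\big)\le |V(Z_j)|\,d\,\big(\lceil|U_0|/d\rceil+j\big).
\]
Since $|V(Z_j)|\le|V(H)|+d-1$, $p\le|V(H)|$, $|V(F_{j-1})|\le|V(H)|+n$ and $\lceil|U_0|/d\rceil\ge\tfrac1d\big(n-(d-1)|V(H)|\big)$, this inequality holds as soon as $n$ exceeds a bound depending only on $|V(H)|$ and $d$; only finitely many partitions $\pi$ are involved, so one $n'$ works for all of them. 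Hence for $n\ge n'$ the full join is $(\lceil|U_0|/d\rceil+p)$-$\R$-free-choosable, and $\lceil|U_0|/d\rceil+p=B(n)$.

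Combining the two bounds, $B(n)\le\chi_\R(H\oplus K_n)\le\chi^L_\R(H\oplus K_n)\le B(n)$ for $n\ge n'$, which forces equality and proves the theorem. I expect the main obstacle to be the grouping used in the core step: each part $W_i$ of $H$ must be fused with exactly $a(W_i)$ clique-vertices into a single $\R$-free block \emph{before} it enters the join induction, because feeding the $H$-parts and the clique-vertices into the induction separately would only give $\chi^L_\R(H)+\lceil n/d\rceil$, which is strictly larger than $B(n)$ for most $H$. A secondary subtlety is that $a(W)$ is not determined by $\omega(H[W])$ alone — for instance with $d=6$ the graph $K_4\oplus C_4$ contains the $6$-regular graph $C_4\oplus C_4$, so $a(V(C_4))<4$ — which is why $B(n)$ must be defined through the genuine quantities $a(W)$; this causes no trouble, since the argument only uses the bounds $0\le a(W)\le d-1$ and never their exact values.
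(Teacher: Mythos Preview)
Your proof is correct, but it takes a different route from the paper's. The paper argues in two lines: Lemma~\ref{le5} gives $\chi_{\R}(H\oplus K_n)\ge (n+\chi_{\R}(H))/d$, and since the largest colour class in an optimal colouring has size at most $|V(H)|+d$, the hypothesis of Lemma~\ref{l4} (in its $\R$-version) is satisfied once $n$ is large, yielding $\chi_{\R}=\chi^L_{\R}$ immediately. In particular the paper never computes either quantity; it only checks the numerical criterion of Lemma~\ref{l4}.

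You instead pin both numbers down to the explicit combinatorial quantity $B(n)$: a direct counting argument (refining the idea behind Lemma~\ref{le5}) gives $\chi_{\R}(H\oplus K_n)\ge B(n)$, and a hand-built iterated application of Lemma~\ref{l2} to the join $F_0\oplus Z_1\oplus\cdots\oplus Z_p$ gives $\chi^L_{\R}(H\oplus K_n)\le B(n)$. This bypasses Lemmas~\ref{le5} and~\ref{l4} entirely and yields strictly more information---you obtain the actual value $\chi_{\R}(H\oplus K_n)=B(n)$ for large $n$, not just equality of the two chromatic parameters---at the cost of a longer argument. The grouping trick of fusing each $W_j$ with $a(W_j)$ clique vertices \emph{before} entering the join induction is exactly what is needed to hit $B(n)$ rather than the weaker $\chi^L_{\R}(H)+\lceil n/d\rceil$, and your observation that only the crude bounds $0\le a(W)\le d-1$ are needed keeps the asymptotic check clean. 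Both approaches are valid; the paper's is shorter because Lemma~\ref{l4} has already absorbed the inductive use of Lemma~\ref{l2}.
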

\begin{proof}
Suppose that $m= |V (H)|$, and $g: V (H) \rightarrow \{1,2,\ldots, \chi_{\R}(H\oplus K_n)\}$  be an optimal $\R$-free coloring of $H\oplus K_n$, with a  color class of sizes $ n_1\geq n_2\geq \ldots\geq n_{\chi_{\R}(H\oplus K_n)}$. One can check that $d \leq n_1\leq m+d$ for sufficiently large $n$. By Lemma \ref{le5}, $\chi_{\R}(H\oplus K_n)\geq \frac{\chi_{\R}(H)+n}{d}$. Observe that $m+n\leq \frac{d(m+d)}{m+d-1} \frac{dn_1}{n_1+1-d}\leq \frac{dn_1}{n_1+1-d}\chi_{\R}(H\oplus K_n)$ for sufficiently large $n$. Therefore,
\[(n_1+1-d)(m+n) \leq dn_1\chi_{\R}(H\oplus K_n)\]
 Hence, by Lemma \ref{l4}, $\chi_{\R}(H\oplus K_n)=\chi^L_{\R}(H\oplus K_n)$, which means that the proof is complete.
\end{proof}

  The following theorem has been proved by  Ohba \cite{ohba2002chromatic}: 
\begin{theorem}\label{t1}{\rm\cite{ohba2002chromatic}}
Let $H$ be a graph. Hence there is a positive integer $n'$ so that $\chi(H\oplus K_n)=\chi_L(H\oplus K_n)$, for each integer $n$ with $n'\leq n$.
\end{theorem}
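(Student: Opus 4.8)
The plan is to read off Theorem \ref{t1} as the special case $d=1$ of Theorem \ref{t0}. First I would set up the dictionary between the two settings. A $1$-regular graph is a disjoint union of edges, and under the subgraph order its unique minimal member is $K_2$; hence, writing $\R$ for the family of all $1$-regular graphs, a graph is $\R$-free precisely when it contains no $K_2$, i.e.\ when it has no edges. Consequently, for every graph $F$ a decomposition of $V(F)$ into $\R$-free parts is exactly a proper vertex colouring, so $\chi_{\R}(F)=\chi(F)$, and a list assignment $L$ admits an $\R$-free $L$-colouring if and only if it admits a proper $L$-colouring, so $\chi^L_{\R}(F)=\chi_L(F)$. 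In the notation of the lemmas this is the case $G=K_2$, $\delta(G)=1$.

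With this identification in hand, the second step is immediate: apply Theorem \ref{t0} with $d=1$ to the graph $H$. It produces a non-negative integer $n'$ such that $\chi_{\R}(H\oplus K_n)=\chi^L_{\R}(H\oplus K_n)$ for every integer $n\geq n'$; substituting $\chi_{\R}=\chi$ and $\chi^L_{\R}=\chi_L$ yields $\chi(H\oplus K_n)=\chi_L(H\oplus K_n)$ for all $n\geq n'$, and replacing $n'$ by $\max\{n',1\}$ if necessary makes it a positive integer as the statement demands.

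Since Theorem \ref{t0} does all the work, there is essentially no obstacle here beyond checking that $d=1$ is admissible there and in the lemmas it invokes, which it is: Theorem \ref{t0} is stated for $d\geq 1$, and $\delta=1$ is legitimate throughout Lemmas \ref{l1}, \ref{l3}, and \ref{l4}. For a self-contained argument one would instead unwind the proof of Theorem \ref{t0}, and the one genuinely delicate point is the uniform bound on the largest colour class: in $H\oplus K_n$ every vertex of $K_n$ is adjacent to every vertex of $H$ and to every other vertex of $K_n$, so a colour class of an optimal colouring either consists of a single vertex of $K_n$ or is an independent set of $H$; hence an optimal colouring of $H\oplus K_n$ restricts to an optimal colouring of $H$ together with $n$ singleton classes on $K_n$, and the largest class has size at most $\max\{|V(H)|,1\}$, a constant independent of $n$. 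Feeding this bound into the hypothesis $(n_1-1)\,|V(H\oplus K_n)|\leq n_1\,\chi(H\oplus K_n)$ of Lemma \ref{l4} — which holds once $n$ is large enough, using $\chi(H\oplus K_n)=\chi(H)+n$ — then gives $\chi(H\oplus K_n)=\chi_L(H\oplus K_n)$, as required.
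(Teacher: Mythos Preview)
Your proposal is correct and is precisely the route the paper takes: immediately after Theorem~\ref{t0} the paper remarks that setting $d=1$ (so that $\R$-free means $K_2$-free, i.e.\ edgeless, and $\chi_{\R}=\chi$, $\chi^L_{\R}=\chi_L$) recovers Theorem~\ref{t1}. Your additional unwinding of the bound $n_1\le |V(H)|$ and the verification of the hypothesis of Lemma~\ref{l4} are more detail than the paper supplies, but entirely in line with its argument.
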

 As a generalized result of Theorem \ref{t1}, Lingyan Zhe, and  Baoyindureng Wu have proven the following theorem \cite{zhen2009list}.
\begin{theorem}\label{t2}{\rm\cite{zhen2009list}}
Let $H$ be a graph. Hence there is a positive integer $n'$ so that $\alpha(H\oplus K_n)=\alpha_L(H\oplus K_n)$, for each integer $n$ with $n'\leq n$.
\end{theorem}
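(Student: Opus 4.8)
The plan is to obtain Theorem~\ref{t2} as the special case $d=2$ of Theorem~\ref{t0}. The first step is to record that the (list) vertex arboricity coincides with the (list) $\R$-free chromatic number when $\R$ is taken to be the collection of all $2$-regular graphs. Indeed, every $2$-regular graph is a vertex-disjoint union of cycles and hence contains some $C_k$ with $k\ge 3$, while conversely each $C_k$ is itself $2$-regular; therefore an induced subgraph $F[W]$ contains a member of $\R$ as a subgraph if and only if $F[W]$ is not acyclic. It follows that, for any graph $F$ and any list assignment $L$, an $L$-coloring of $F$ is acyclic precisely when it is $L$-$\R$-free, so that $\chi_{\R}(F)=\alpha(F)$ and $\chi^L_{\R}(F)=\alpha_L(F)$.

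Next I would simply invoke Theorem~\ref{t0} with $d=2$: for the given graph $H$ there is a non-negative integer $n'$ such that $\chi_{\R}(H\oplus K_n)=\chi^L_{\R}(H\oplus K_n)$ for every integer $n\ge n'$. Taking $F=H\oplus K_n$ in the identifications above turns this into $\alpha(H\oplus K_n)=\alpha_L(H\oplus K_n)$ for all $n\ge n'$, which is exactly Theorem~\ref{t2}; if a strictly positive threshold is desired one may replace $n'$ by $\max\{n',1\}$.

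Since the whole argument is a translation between two equivalent descriptions of acyclic colorings, I do not expect a real obstacle here. The only point that needs care is verifying the equivalence ``$\R$-free $\Longleftrightarrow$ acyclic'' --- that forbidding every $2$-regular subgraph is the same as forbidding every cycle --- together with the matching of the $k$-choosability definitions under this identification; once that is settled the result is immediate. For completeness, one could remark that the analogous reading of Theorem~\ref{t0} with $d=1$ and $\R=\{K_2\}$ recovers Theorem~\ref{t1}, because a graph is $K_2$-free exactly when it has no edges, whence $\chi_{\{K_2\}}=\chi$ and $\chi^L_{\{K_2\}}=\chi_L$.
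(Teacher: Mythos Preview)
Your proposal is correct and matches the paper's own treatment: the paper does not give a separate proof of Theorem~\ref{t2} but simply observes, immediately after Theorem~\ref{t0}, that setting $d=2$ (so that $\R$-freeness coincides with acyclicity) recovers Theorem~\ref{t2}, and that $d=1$ recovers Theorem~\ref{t1}. Your write-up is in fact slightly more careful than the paper's remark, since you explicitly justify why forbidding all $2$-regular subgraphs is equivalent to forbidding all cycles.
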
 
 In Theorem \ref{t0}, if we take $d=1$ hence $\R=\{K_2\}$, then we get Theorem \ref{t1}. Also, by setting $d=2$, we have $\R=\{ C_n, n \geq 3\}$ and  for any
 arbitrary graph for $H$, then it is easy to say that  we get Theorem \ref{t2}.
%%%%%%%%%%%%%%%%%%%%%%%%%%%%%%%%%%%%%%%%%%%%%%%%%%%%%%%%%%%%%%%%%%%%%%%%%%%%%%%%%%%%%%%%%%%%%%%%%%%%%%%%%%%%%%%%%%%%%%%%%%%%%%%%%%%%%%%%%%%%%%%%%%%%%%%%%%%%%
\subsection{ Some research problems related to the contents of this paper.} 
In this section, we propose some research problems related to the contents of this paper. The following conjecture has been proposed by  Ohba \cite{ohba2002chromatic}: 
\begin{conjecture}\label{co1}{\rm\cite{ohba2002chromatic}}
	If $|V(H)|\leq  2\chi(H)+1$, then:
	\[\chi_L(H)=\chi(H).\]   
\end{conjecture}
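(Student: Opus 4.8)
The plan is to establish $\chi_L(H)\le\chi(H)$ (the reverse inequality is automatic), i.e.\ that every $H$ with $|V(H)|\le 2\chi(H)+1$ is $\chi(H)$-choosable. First I would make two harmless reductions. Replacing $H$ by the complete multipartite graph $K^{*}$ whose parts are the colour classes of an optimal proper colouring of $H$ only adds edges, so if $K^{*}$ is $\chi(H)$-choosable then so is $H$, while $\chi(K^{*})=\chi(H)=:k$ and $|V(K^{*})|=|V(H)|\le 2k+1$; and enlarging parts of $K^{*}$ (adding vertices to existing parts) keeps $\chi=k$ and can only make choosability harder. Hence I may assume $H=K_{n_1,\dots,n_k}$ with $n_1\ge\cdots\ge n_k\ge 1$ and $\sum_i n_i=2k+1$. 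Within this family I take a counterexample minimising $k$, and among these a bad $k$-list assignment $L$ minimising $\sum_v|L(v)|$, so that every list has size exactly $k$ and the instance is critical.

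The key reformulation is that an $L$-colouring of a complete multipartite graph is precisely a choice of pairwise disjoint colour sets $S_1,\dots,S_k$, one per part, with $S_i$ meeting every list in part $i$ (so $|S_i|\le n_i$); non-colourability is the failure of a Hall-type condition for such disjoint transversals. The standard move is then to find an independent set $I$ inside a single part and a colour $c\in\bigcap_{v\in I}L(v)$ such that deleting $I$ and striking $c$ from the remaining lists yields a smaller instance still obeying the hypothesis — $|V(H)\setminus I|\le 2\chi(H\setminus I)+1$ with surviving lists of size $\ge\chi(H\setminus I)$ — contradicting minimality. A counting input guides the search: $2k+1$ vertices with $k$-lists give $(2k+1)k$ vertex--colour incidences over $U=\bigcup_v L(v)$, so when $|U|$ is small some colour lies in $\ge 3$ lists of one part (a recolourable set), and when $|U|$ is large the lists are nearly disjoint and the transversal system can be built greedily part by part, using Ohba's partial results (Theorems~\ref{thm1},~\ref{thm2}) and the join bound (Lemma~\ref{l2}, Theorem~\ref{mth1}) to dispose of the configurations where $H$ splits as a join of smaller complete multipartite graphs.

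The hard part — the reason this is Ohba's conjecture rather than a corollary of Lemma~\ref{l4} (whose $G=K_2$ case only handles $n_1=1$) or of Theorem~\ref{mth2} (which yields merely $|V(H)|\le\chi(H)+\sqrt{\chi(H)}$) — is the intermediate regime, where $|U|$ is moderate and deleting a monochromatic independent set need not preserve the vertex bound and the list-size bound at once (e.g.\ removing three vertices of a part of size $>3$ drops the surviving list size below $\chi$). One must simultaneously control which part $I$ comes from, how the colour universe shrinks, how colours are shared among the two or three largest parts, and the profile $(n_1,\dots,n_k)$ — in particular forcing enough structure among the small parts, such as $n_k\le 2$ and a bound on the number of size-$2$ parts with equal lists. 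This is exactly the extremal/discharging argument of Noel, Reed and Wu, and I would follow that route; I expect its case analysis and the attendant inequalities to be by far the most intricate and error-prone component of the proof.
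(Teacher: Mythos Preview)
The paper does not prove this statement at all. It is stated as Conjecture~\ref{co1}, attributed to Ohba, and the paper merely remarks in one line that it was subsequently proved by Noel, Reed and Wu (cited as \cite{noel2015proof}); no argument is given or even sketched. So there is no ``paper's own proof'' against which to compare your proposal.

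Your proposal is a reasonable high-level outline of the Noel--Reed--Wu strategy: reduction to complete multipartite graphs, passage to a minimal counterexample with tight lists, the Hall-type reformulation via disjoint transversals, the counting dichotomy on $|U|$, and the acknowledgment that the substantive work lies in the intermediate regime handled by the Noel--Reed--Wu case analysis. You are right that the tools actually developed in this paper (Lemma~\ref{l2}, Lemma~\ref{l4}, Theorem~\ref{mth2}) are far too weak to reach the full $2\chi(H)+1$ bound, recovering only Ohba's original $\chi(H)+\sqrt{2\chi(H)}$ range in the $G=K_2$ case. As a proof proposal, however, what you have written is an outline and a pointer to \cite{noel2015proof} rather than a proof: the ``hard part'' you flag is essentially the entire content of that paper, and none of its structural lemmas or the discharging/extremal analysis is reproduced here. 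If the goal is to supply a proof, you would need to actually carry out that argument; if the goal is to compare with the present paper, there is simply nothing to compare.
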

Jonathan A. Noel et.al in \cite{noel2015proof} have proven the conjecture\ref{co1}. The first problem concerns  Theorem \ref{mth2} and Conjecture \ref{co1}, as we address below:  
\begin{problem}
	For each two connected graphs  $H$ and $G$, find a constant $M$, so that if  $|V(H)|\leq M\chi_G(H)$, Then: 
	\[\chi^L_{G}(H)=\chi_{G}(H).\]
\end{problem}

Y.Rowshan  and A.Taherkhani in \cite{rowshan2020catlin} have proven the following theorem.
\begin{alphtheorem}\label{2th}
Suppose that $H$ and $G$ be two connected graphs while $H$ satisfies the followinf items:
	\begin{itemize}
		\item If $G$ is regular, then $H\ncong G$.
	    \item If $G\cong K_{\delta(G)+1}$, then $H$ is not $K_{k\delta(G)+1}$.
	    \item If $G\cong K_2$, then $H$ is neither a complete graph nor an  odd cycle.
	\end{itemize}
	Then:
	\[\chi_{_G}(H)\leq\lceil \frac{\Delta(H)}{\delta(G)}\rceil.\]
	And    one of those color classes is a maximum induced  $G$-free subgraph in $H$.
\end{alphtheorem}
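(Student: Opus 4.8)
The plan is to use a largest induced $G$-free subgraph of $H$ as one colour class and recurse on the remainder, after disposing of the ``non-divisible'' case by a counting argument. Write $\delta=\delta(G)$ and $k=\lceil\Delta(H)/\delta\rceil$, and observe the elementary fact that every graph of maximum degree less than $\delta$ is $G$-free, since $\delta(G)=\delta$ forces each vertex of each copy of $G$ to have degree at least $\delta$. First I would record the weak bound $\chi_G(F)\le\lceil(\Delta(F)+1)/\delta\rceil$ for an arbitrary graph $F$: take a partition of $V(F)$ into $\lceil(\Delta(F)+1)/\delta\rceil$ classes minimising the number of monochromatic edges; if some vertex had at least $\delta$ neighbours in its own class then, since $\lceil(\Delta(F)+1)/\delta\rceil>\Delta(F)/\delta$, it would by averaging have fewer than $\delta$ neighbours in some other class, and moving it there would strictly decrease the count -- a contradiction. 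So every class has maximum degree below $\delta$, hence is $G$-free.

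Next, let $S$ be a largest subset of $V(H)$ with $H[S]$ $G$-free. By maximality, for each $v\notin S$ the graph $H[S\cup\{v\}]$ contains a copy of $G$, which must use $v$, so $v$ has at least $\delta$ neighbours inside $S$; hence $\Delta(H-S)\le\Delta(H)-\delta$. \emph{If $\delta\nmid\Delta(H)$}, the weak bound gives $\chi_G(H-S)\le\lceil(\Delta(H)-\delta+1)/\delta\rceil=\lceil(\Delta(H)+1)/\delta\rceil-1=k-1$, so assigning colour $1$ to $S$ and colours $2,\dots,k$ to $H-S$ finishes the proof, with $S$ the required maximum $G$-free colour class; no exceptional hypothesis is used. \emph{If $\delta\mid\Delta(H)$}, write $\Delta(H)=q\delta$, so $k=q$ and $\Delta(H-S)\le(q-1)\delta$. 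If this inequality is strict, the weak bound again gives $\chi_G(H-S)\le q-1$, done; the substantive case is $\Delta(H-S)=(q-1)\delta$, which I would treat by induction on $\Delta(H)$. The base case $\Delta(H)\le\delta$ is handled directly: if $\Delta(H)<\delta$ then $H$ is $G$-free, and if $\Delta(H)=\delta$ then $G\nsubseteq H$ -- using $H\ncong G$ if $G$ is $\delta$-regular and the existence of a vertex of $G$ of degree exceeding $\delta$ if $G$ is not regular -- so $\chi_G(H)=1=k$. For the inductive step one applies the theorem to each component $C$ of $H-S$, which has $\lceil\Delta(C)/\delta\rceil\le q-1$ and $\Delta(C)<\Delta(H)$, and reuses the palette $\{2,\dots,q\}$ across components.

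The main obstacle is that a component $C$ with $\Delta(C)=(q-1)\delta$ may fail the exceptional hypotheses: it might be isomorphic to $G$ (when $G$ is $\delta$-regular and $q=2$), to a complete graph or an odd cycle (when $G=K_2$), or to $K_{q'\delta+1}$ for the relevant $q'$ (when $G=K_{\delta+1}$), and then $\chi_G(C)=q$ rather than $q-1$. The remedy, in the spirit of the proofs of Brooks' and Catlin's theorems, is to modify $S$ -- either by switching to a different largest $G$-free induced subgraph, or by a local swap removing one vertex of $S$ and absorbing a suitably chosen vertex of the offending component -- so as to destroy its rigid structure while keeping $H[S]$ $G$-free and $|S|$ maximal. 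It is exactly here that ``$H\ncong G$'', ``$H$ is neither a complete graph nor an odd cycle'', and ``$H$ is not $K_{k\delta+1}$'' are invoked to guarantee such a modification exists; since the case $G=K_2$ of this step is essentially the hard part of Brooks' theorem, this is where I expect the real difficulty to lie. Once the modified $S$ is in hand, $H-S$ has no exceptional component, induction gives $\chi_G(H-S)\le q-1$, and $S$ is the maximum induced $G$-free colour class promised in the statement.
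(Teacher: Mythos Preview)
The paper does not prove this theorem. Theorem~\ref{2th} is quoted from the external reference of Rowshan and Taherkhani (their Catlin-type result) and is stated here only to motivate Problem~2; there is no proof in the present paper to compare your proposal against.

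On the substance of your outline: the overall architecture is the standard Catlin--Brooks scheme and matches what one expects from the cited source. Your weak bound $\chi_G(F)\le\lceil(\Delta(F)+1)/\delta\rceil$ via a Lov\'asz-type ``minimise monochromatic edges'' partition is correct, and the observation that a maximum $G$-free set $S$ forces every $v\notin S$ to have at least $\delta$ neighbours in $S$ (whence $\Delta(H-S)\le\Delta(H)-\delta$) is exactly the right leverage. The non-divisible case and the induction framework in the divisible case are set up properly.

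However, as you yourself flag, the proposal is not a proof: the entire content of the theorem lives in the step you leave open, namely showing that when a component $C$ of $H-S$ is one of the exceptional configurations (isomorphic to $G$ when $G$ is $\delta$-regular, or to $K_{(q-1)\delta+1}$ when $G=K_{\delta+1}$, or to a clique or odd cycle when $G=K_2$), one can perform a vertex swap between $S$ and $C$ that preserves $|S|$, keeps $H[S]$ $G$-free, and breaks the exceptional structure of $C$. Saying ``in the spirit of Brooks' and Catlin's theorems'' is not enough: one must exhibit a vertex $u\in S$ adjacent to some $w\in C$ with the property that $H[(S\setminus\{u\})\cup\{w\}]$ is still $G$-free, and argue that the failure of such a swap for \emph{every} choice forces $H$ itself to be one of the globally excluded graphs. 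That case analysis---particularly distinguishing the regular-$G$ case from the $K_{\delta+1}$ case---is the heart of the argument and is absent from your proposal.
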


 The second problem concerns  Theorem \ref{mth3} and \ref{2th}, as we address below: 
\begin{problem}
Suppose that $H$ and $G$ be two connected graphs while $H$ satisfies the following items:
	\begin{itemize}
		\item If $G$ is regular, then $H\ncong G$.
		\item If $G\cong K_{\delta(G)+1}$, then $H$ is not $K_{k\delta(G)+1}$.
		\item If $G\cong K_2$, then $H$ is neither a complete graph nor an  odd cycle.
	\end{itemize}
	Then:  
	\[\chi^L_{_G}(H)\leq\lceil \frac{\Delta(H)}{\delta(G)}\rceil.\]
	And    one of those color classes is a maximum induced  $G$-free subgraph in $H$.
\end{problem}
 
\bibliographystyle{plain}
\bibliography{l.c}
\end{document}